\begin{document}
\providecommand{\keywords}[1]{\textbf{\textit{Keywords: }} #1}
\newtheorem{thm}{Theorem}[section]
\newtheorem{lemma}[thm]{Lemma}
\newtheorem{prop}[thm]{Proposition}
\newtheorem{cor}[thm]{Corollary}
\theoremstyle{definition}
\newtheorem{defi}[thm]{Definition}
\theoremstyle{remark}
\newtheorem{remark}[thm]{Remark}
\newtheorem{prob}[thm]{Problem}
\newtheorem{conjecture}[thm]{Conjecture}
\newtheorem{ques}[thm]{Question}

\newcommand{\cc}{{\mathbb{C}}}   
\newcommand{\ff}{{\mathbb{F}}}  
\newcommand{\nn}{{\mathbb{N}}}   
\newcommand{\qq}{{\mathbb{Q}}}  
\newcommand{\rr}{{\mathbb{R}}}   
\newcommand{\zz}{{\mathbb{Z}}}  
\newcommand{\K}{\mathbb{K}}

\title{On unramified solvable extensions of small number fields}
\author{Joachim K\"onig}
\address{Department of Mathematics Education, Korea National University of Education, Cheongju, South Korea}
\email{jkoenig@knue.ac.kr}
\begin{abstract}
We investigate unramified extensions of number fields with prescribed solvable Galois group and certain extra conditions. In particular, we are interested in the minimal degree $d'$ of a number field $K$, Galois over $\mathbb{Q}$, such that $K$ possesses an unramified $G$-extension. We improve the best known bounds for the degree of such number fields $K$ for certain classes of solvable groups, in particular nilpotent groups. 
\end{abstract}
\maketitle

\section{Introduction}
A problem of widespread interest in algebraic number theory is the construction of unramified extensions $L/K$ of number fields with prescribed Galois group. It is well-known (e.g., as a direct consequence of results on $S_n$-extensions with squarefree discriminant, cf.\ \cite{Uchida}, \cite{Yamamoto}) that such extensions exist for any given group $G$. A more interesting question is, what is the smallest degree of such a number field $K$ over $\mathbb{Q}$, possibly with additional requirements on $K$.
In the following, we denote by $d(\mathbb{Q},G)$ the smallest integer $d$ such that there exists a number field $K$ of degree $d$ over $\mathbb{Q}$, such that $K$ possesses an unramified Galois extension with group $G$; and by $d'(\mathbb{Q},G)$ the smallest integer $d'$ as above such that $K/\mathbb{Q}$ is additionally Galois. It is commonly conjectured that every finite group occurs as the Galois group of an unramified Galois extension $L/K$, where $K$ is a quadratic number field, i.e., $d(\mathbb{Q},G)=d'(\mathbb{Q},G)=2$. However, this is a difficult question (in class field theory) even for the case of abelian groups. Detailed heuristics predicting the distribution of such extensions, generalizing the Cohen-Lenstra heuristics, have been developed by Wood (\cite{Wood}).

For solvable $G$, it is known from work of Kim building on Shafarevich's method (\cite{KKS2}, \cite{KKS3}) that $d'(\mathbb{Q},G) \le \exp(G)$, where the {\it exponent} $\exp(G)$ of $G$ is defined as the least common multiple of all element orders in $G$. Previously, Nomura (\cite{Nomura}) had given the bound $d'(\mathbb{Q},G)\le p\cdot |\Phi(G)|$ for $p$-groups $G$, with $\Phi(G)$ the Frattini subgroup of $G$. As noted in \cite[Remark 5.2]{KKS2}, this bound is always $\ge \exp(G)$. 

We additionally define $e(\mathbb{Q},G)$ as the minimal number $e$ such that $\mathbb{Q}$ admits a tamely ramified $G$-extension all of whose ramification indices divide $e$. The relevance of this definition for the original question on unramified $G$-extensions is due to Abhyankar's lemma, which shows immediately that $d(\mathbb{Q},G) \le e(\mathbb{Q},G)$ (cf.\ \cite[Lemma 2.1]{KNS}). 

For a finite group $G$, define the {\it generator exponent} of $G$ to be  $$\textrm{ge}(G):=\min_S \textrm{lcm}\{ord(x)\mid x\in S\},$$ where $S$ ranges over all generating subsets of $G$. 
It is easy to see that $e(\qq,G)\ge \textrm{ge}(G)$ for all finite groups $G$. This is because the set of all inertia groups of a (tamely ramified) $G$-extension has to generate $G$.
The converse is open.

\begin{ques}
\label{conj:e3}
Let $G$ be a finite group. 
Does $e(\qq,G)$ equal $\textrm{ge}(G)$?
\end{ques}

Note that while bounds on $e(\mathbb{Q},G)$ do not automatically yield bounds on $d'(\mathbb{Q},G)$ in general, they do as soon as the implied tamely ramified $G$-extensions satisfy certain additional local conditions, cf.\ Lemma \ref{lem:unram}. We therefore connect Question \ref{conj:e3} with the following, which is more accessible than the stronger conjecture $d'(\mathbb{Q},G)=2$.
\begin{ques}
\label{conj:d}
Let $G$ be a finite group. Is it true that $d'(\qq,G)\le \textrm{ge}(G)$?
\end{ques}

In the previous paper \cite{KNS}, Question \ref{conj:e3} was investigated using function field methods, with a focus on nonsolvable groups, in particular reaching the best possible bound $d(\mathbb{Q},G) = e(\qq,G)=\textrm{ge}(G) =2$ for several new groups.

Here, we instead focus on solvable groups. 
For certain classes of groups, in particular for so-called regular $p$-groups, it holds that $\textrm{ge}(G)=\exp(G)$, meaning that already the aforementioned results \cite{KKS2}, \cite{KKS3} yield a positive answer to Questions \ref{conj:e3} and \ref{conj:d} for such groups. In particular, since it is known that  all $p$-groups of nilpotency class $\le p-1$ are regular, it follows that $d'(\qq,G) \le e(\qq, G)=\textrm{ge}(G)$ for those groups.

The main goal of this note is to extend the above beyond the special case $\exp(G)=\textrm{ge}(G)$. In particular, we prove:
\begin{thm}
\label{thm:main}
Let $G$ be a nilpotent group of nilpotency class $\le p$, where $p$ is the smallest prime divisor of $|G|$. Then $d'(\mathbb{Q},G)\le e(\mathbb{Q},G) = \textrm{ge}(G)$. More precisely, there exist infinitely many cyclic number fields $K$ of degree $\le\textrm{ge}(G)$ such that $K$ possesses an unramified $G$-extension. 
\end{thm}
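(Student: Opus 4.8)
The plan is to reduce the nilpotent case to the $p$-group case and then to handle each $p$-group factor by a Shafarevich-style embedding construction that keeps careful track of inertia. Since a nilpotent $G$ is the direct product of its Sylow subgroups $G_p$, an unramified $G$-extension of $K$ can be built from unramified $G_p$-extensions over a common $K$ provided the auxiliary ramified data for different primes can be chosen over disjoint sets of rational primes; the degree of $K$ will then be the lcm of the degrees needed for each $G_p$, and one checks this lcm equals $\textrm{ge}(G)=\mathrm{lcm}_p\,\textrm{ge}(G_p)$. So from now on assume $G$ is a $p$-group of nilpotency class $\le p$.

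Next I would produce a tamely ramified $G$-extension $N/\qq$ all of whose inertia groups are cyclic of order dividing $\textrm{ge}(G)$, realizing $e(\qq,G)=\textrm{ge}(G)$. The natural approach is induction along a central series $1=Z_0\lhd Z_1\lhd\cdots\lhd Z_c=G$ with $Z_{i+1}/Z_i$ in the center of $G/Z_i$, lifting through each central extension $1\to Z_{i+1}/Z_i\to G/Z_i\to G/Z_{i+1}\to 1$ via Shafarevich's method (solution of embedding problems with cyclic kernel by adjoining suitable ramified primes, using Grunwald–Wang to prescribe local behavior). The class $\le p$ hypothesis is exactly what lets one take the steps with kernel of exponent $p$ while controlling the order of the resulting inertia — this is the mechanism by which $\textrm{ge}(G)$ rather than $\exp(G)$ enters: one arranges that each newly ramified prime has inertia generated by an element belonging to a fixed generating set $S$ with $\mathrm{lcm}\{ord(x):x\in S\}=\textrm{ge}(G)$, while the "deeper" relations are absorbed by primes already ramified at earlier stages. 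I would cite \cite{KKS2}, \cite{KKS3} for the existence and regularity input (regular $p$-groups, class $\le p-1$ automatically regular), and treat the borderline class exactly $=p$ case by an explicit analysis of the relevant cohomology (the obstruction in $H^2$ for the central kernel) showing the needed generating set can still be used.

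Then I would apply Lemma \ref{lem:unram}: the tamely ramified $G$-extension $N/\qq$ just built, with cyclic inertia of orders dividing $\textrm{ge}(G)$, can be arranged to satisfy the local conditions of that lemma, so that after base change to an appropriate cyclic field $K/\qq$ the extension becomes unramified. Concretely, for each rational prime $q$ ramified in $N/\qq$ with inertia of order $e_q\mid\textrm{ge}(G)$, choosing $K$ to contain a cyclic subfield of degree $e_q$ ramified at $q$ (or more uniformly, taking $K$ to be a single cyclic field of degree $\textrm{ge}(G)$ in which all these $q$ are tamely and suitably ramified, available by Dirichlet/Grunwald–Wang with $K$ ranging over infinitely many such fields) and applying Abhyankar's lemma kills the ramification in $NK/K$. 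One must also ensure $K$ and $N$ are linearly disjoint over $\qq$ so that $NK/K$ still has group $G$; this is a genericity condition satisfiable in infinitely many ways, giving the "infinitely many cyclic $K$" conclusion and the bound $d'(\qq,G)\le\textrm{ge}(G)$.

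The main obstacle is the inductive control of inertia in the central-extension steps when the nilpotency class is exactly $p$: here the $p$-group need not be regular, so one cannot simply invoke $\textrm{ge}(G)=\exp(G)$, and one must instead show directly that the embedding problems arising can be solved by adjoining primes whose inertia lies in the chosen bounded-exponent generating set — equivalently, that the "non-split" part of each central relation is already visible at primes ramified in earlier layers. Making this precise (and verifying compatibility with the Grunwald–Wang local conditions needed later for Lemma \ref{lem:unram}) is the technical heart of the argument; everything else is assembly via Abhyankar's lemma and linear disjointness.
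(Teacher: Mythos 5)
Your overall skeleton (reduce to $p$-groups, build a tame $G$-extension with inertia of order dividing $\textrm{ge}(G)$, then pass to an unramified extension of a cyclic field via Lemma \ref{lem:unram}) matches the paper's, but the technical heart is missing, and you say so yourself in your last paragraph. Two concrete problems. First, the route you propose --- inducting along a central series and solving the central embedding problems $1\to Z_{i+1}/Z_i\to G/Z_i\to G/Z_{i+1}\to 1$ --- runs into the fact that these embedding problems are central and in general non-split, whereas Shafarevich's machinery (\cite[Theorem 9.6.7]{NSW}) solves \emph{split} embedding problems with nilpotent kernel, with the new ramification having indices dividing $\exp(\text{kernel})$. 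The paper avoids this entirely: it inducts along a chain of nilpotent normal subgroups $N_i$ together with \emph{partial complements} $G_i$ (so $G_{i-1}$ is a quotient of the split group $N_i\rtimes G_i$), which is exactly the setting where the split-embedding theorem applies and where the ramification indices introduced at each stage are controlled by $\exp(N_i)$. This is packaged in the constant $r(G)$ of Definition \ref{def:scholzexp} and Proposition \ref{solv_cyclic_decomp}.

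Second, and more importantly, you give no mechanism for why the inertia orders can be bounded by $\textrm{ge}(G)$ rather than $\exp(G)$; asserting that "each newly ramified prime has inertia generated by an element of $S$" is not something Shafarevich's method delivers. The actual content of the paper at this point is purely group-theoretic: Lemma \ref{lem:main} shows, via P.~Hall's commutator collection formula (Theorem \ref{thm:hall}) and regularity of $p$-groups of class $<p$, that $\exp(G')$ divides $e:=\textrm{ge}(G)$ when the class is $\le p$; Theorem \ref{thm:pgroups} then takes $N_1=\langle G',x_k\rangle$ (for a generator $x_k$ of order dividing $e$), shows $\exp(N_1)\mid e$ and that $U=\langle x_1,\dots,x_{k-1}\rangle$ is a partial complement, and concludes $r(G)=\textrm{ge}(G)$ by induction. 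Without this lemma (or a substitute), the bound you would get from any embedding-problem induction is $\exp(G)$, which is exactly what the theorem is trying to beat. Your reduction from nilpotent groups to $p$-groups is fine in spirit (the paper does it at the level of $r(G)$ via Lemma \ref{lem:dirprod} rather than by gluing field extensions), but the core of the proof still needs to be supplied.
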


Note that groups of nilpotency class $p$ include many groups for which $\textrm{ge}(G)$ is strictly smaller than $\exp(G)$, making Theorem \ref{thm:main} an improvement over previously available bounds. An easy example (but far from the only one) is the wreath product $G=C_p\wr C_p (=(C_p)^p \rtimes C_p)$, whose nilpotency class is $p$,\footnote{Indeed, the nilpotency class of a $p$-group of order $p^k$ ($k\ge 2$) is always bounded from above by $k-1$, and in case of nilpotency class $<p$, the following discrepancy between exponent and generator exponent would be impossible, see Section \ref{sec:group}.} generator exponent is $p$ and exponent is $p^2$.

Before proving Theorem \ref{thm:main} in Section \ref{sec:proof}, we discuss some methods allowing generalizations in Section \ref{sec:compat}, in particular providing positive answers to Questions \ref{conj:e3} and \ref{conj:d} for certain classes of $p$-groups of arbitrarily high nilpotency class.

\section{Some prerequisites}
\label{sec:group}
We recall some standard notions and elementary results, mostly from group theory, which will be used later. The first is the notion of a regular $p$-group. One of several equivalent definitions is the following (see \cite[Chapter III.10]{Huppert}).

\begin{defi}[Regular $p$-group]
A $p$-group $G$ is called regular if for every $a,b\in G$ there exists $c$ in the derived subgroup of $\langle a,b\rangle \le G$ such that $a^pb^p = (ab)^p c^p$.
\end{defi}

We will only use the following two consequences of regularity, cf.\ Corollary 4.13 and Theorem 4.26 in \cite{Hall}.
\begin{prop}
Every $p$-group of nilpotency class $<p$ is regular.
\end{prop}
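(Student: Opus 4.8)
The plan is to deduce this from P.\ Hall's collection process, in the form of the Hall--Petrescu identity: for any elements $a,b$ of a group and any $n\ge 1$, one has $a^nb^n=(ab)^n\,c_2^{\binom n2}c_3^{\binom n3}\cdots c_n^{\binom nn}$ for suitable $c_i\in\gamma_i(\langle a,b\rangle)$. Applying this with $n=p$ is the heart of the matter: $p$ divides $\binom pi$ for $1\le i\le p-1$, so each middle factor is $\bigl(c_i^{\binom pi/p}\bigr)^p$, a $p$-th power of an element of $\gamma_i(\langle a,b\rangle)\subseteq\langle a,b\rangle'$; and the bound on the nilpotency class forces $\gamma_p(\langle a,b\rangle)\le\gamma_p(G)=1$, killing the last factor. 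Hence $a^pb^p=(ab)^p\, d_2^p d_3^p\cdots d_{p-1}^p$ with every $d_i$ in the derived subgroup of $\langle a,b\rangle$.

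It remains to collapse the product $d_2^p\cdots d_{p-1}^p$ of $p$-th powers into a single $p$-th power of an element of $\langle a,b\rangle'$. I would do this by strengthening the assertion and inducting on $|G|$: prove simultaneously that a $p$-group $G$ of class $<p$ is regular \emph{and} that the set $\{g^p:g\in G\}$ coincides with the subgroup it generates (so in particular is closed under multiplication). The abelian case is clear. For the inductive step, first establish regularity of $G$: when $\langle a,b\rangle\ne G$ this is immediate from the inductive hypothesis applied to the proper subgroup $\langle a,b\rangle$, which is again a $p$-group of class $<p$; when $\langle a,b\rangle=G$, run the Hall--Petrescu reduction above and invoke the second half of the inductive hypothesis for the proper subgroup $G'$ (proper since $G$ is nilpotent, and of class $<p$) to absorb $d_2^p\cdots d_{p-1}^p$ into one $p$-th power of an element of $G'=\langle a,b\rangle'$. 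Granting regularity of $G$, the claim about $p$-th powers follows by a secondary induction on the number of $p$-th-power factors: regularity rewrites $g_1^p h^p$ as $(g_1h)^p w^p$ with $w\in\langle g_1,h\rangle'$; if $\langle g_1,h\rangle\ne G$ one finishes by the inductive hypothesis for that subgroup, and otherwise one applies regularity once more to the pair $(g_1h,w)$ and descends into the proper subgroup $\langle g_1h\rangle\,G'$---proper because a non-cyclic $p$-group has non-cyclic abelianization, so the cyclic image of $\langle g_1h\rangle$ cannot be all of $G/G'$.

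The main obstacle is precisely this bookkeeping: regularity and the description of the set of $p$-th powers are genuinely intertwined, and the induction closes only if one carries both through together, with particular care in the $2$-generated case, where there is no proper subgroup containing both $a$ and $b$ to fall back on. The remaining ingredients---the divisibility $p\mid\binom pi$, the vanishing of $\gamma_p$, and the stability of the hypothesis under passage to subgroups---are routine. (As the surrounding text indicates, this is classical; one may instead simply cite \cite[Ch.\ III.10]{Huppert} or \cite[Corollary 4.13]{Hall}.)
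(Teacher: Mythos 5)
Your proposal is correct. The paper itself offers no proof of this proposition---it is quoted from the literature (Corollary 4.13 in \cite{Hall}, or Chapter III.10 of \cite{Huppert})---and what you have written is essentially a faithful reconstruction of the classical argument given there. You correctly identify the one genuinely delicate point: the Hall--Petrescu identity with $n=p$, together with $p\mid\binom{p}{i}$ for $1\le i\le p-1$ and the vanishing of $\gamma_p(\langle a,b\rangle)$, only yields $a^pb^p=(ab)^p d_2^p\cdots d_{p-1}^p$ with $d_i\in\langle a,b\rangle'$, whereas the definition used in the paper demands a \emph{single} $c^p$ with $c\in\langle a,b\rangle'$. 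Your remedy---strengthening the induction to carry along the statement that the set of $p$-th powers equals the subgroup it generates, and descending into the proper subgroups $\langle a,b\rangle$, $G'$, and $\langle g_1h\rangle G'$ as appropriate---is exactly how Hall closes this loop; the auxiliary facts you use (nilpotency class is inherited by subgroups, $G'<G$ for nontrivial nilpotent $G$, and a non-cyclic $p$-group has non-cyclic abelianization via the Burnside basis theorem) are all sound. Two minor remarks: in the final descent you do not actually need to apply regularity to the pair $(g_1h,w)$ before passing to $\langle g_1h\rangle G'$, since both $g_1h$ and $w$ already lie in that proper subgroup; and you could have phrased the collection step using the paper's own Theorem \ref{thm:hall} (whose exponents $f_{i,j}(p)$ are likewise divisible by $p$ for weights $i<p$) rather than the Hall--Petrescu form, which would make the argument self-contained relative to the paper. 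This is in fact the same divisibility trick the paper exploits later in the proof of Lemma \ref{lem:main}.
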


\begin{prop}
\label{prop:expge}
In a regular $p$-group $G$, the order of a product of any finitely many elements cannot exceed the orders of all these elements. In particular, $\exp(G) = \textrm{ge}(G)$.
\end{prop}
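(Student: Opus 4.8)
The plan is to deduce Proposition~\ref{prop:expge} from the one nontrivial structural fact about regular $p$-groups that it really depends on, namely that the $p^n$-th powers behave ``additively'': for a regular $p$-group $G$ and every $n\ge 0$, the set $\Omega_n(G):=\{x\in G\mid x^{p^n}=1\}$ of elements of order dividing $p^n$ is a \emph{subgroup} of $G$. (This is precisely one of the consequences of regularity alluded to above; see \cite[Chapter III.10]{Huppert} or \cite{Hall}.) Granting this, the first assertion is immediate: given $x_1,\dots,x_k\in G$, set $p^n:=\max_i\mathrm{ord}(x_i)$, which makes sense since in a $p$-group all element orders are powers of $p$. Each $x_i$ lies in $\Omega_n(G)$, hence so does $x_1\cdots x_k$ because $\Omega_n(G)$ is closed under multiplication, i.e.\ $(x_1\cdots x_k)^{p^n}=1$; thus $\mathrm{ord}(x_1\cdots x_k)\le p^n=\max_i\mathrm{ord}(x_i)$.

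For the equality $\exp(G)=\textrm{ge}(G)$, one inequality is formal and holds for every finite group: taking $S=G$ in the definition of $\textrm{ge}$ gives $\textrm{ge}(G)\le\textrm{lcm}\{\mathrm{ord}(x)\mid x\in G\}=\exp(G)$. For the reverse inequality I would fix a generating set $S$ of $G$ realising the minimum, so that $\textrm{lcm}\{\mathrm{ord}(x)\mid x\in S\}=\textrm{ge}(G)$; since $G$ is a $p$-group, this least common multiple of prime powers equals $p^m:=\max_{x\in S}\mathrm{ord}(x)$. Every element of $G$ is a product of finitely many elements of $S\cup S^{-1}$, and passing to an inverse leaves the order unchanged, so the first assertion yields $\mathrm{ord}(g)\le p^m$ for all $g\in G$. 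Hence $\exp(G)\le p^m=\textrm{ge}(G)$, and equality follows.

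Thus the real content, and the step I expect to be the main obstacle, is the claim that $\Omega_n(G)$ is a subgroup, equivalently that a product of $p^n$-th powers in a regular $p$-group is again a $p^n$-th power. I would prove this by induction on $|G|$: reduce to the two-generator case $G=\langle a,b\rangle$, where the defining identity $a^pb^p=(ab)^pc^p$ with $c\in\langle a,b\rangle'$ is available, and exploit that when $G$ is nonabelian the derived subgroup $\langle a,b\rangle'$ has strictly smaller order, so the inductive hypothesis controls the commutator contributions; bootstrapping from the case $n=1$ to general $n$ is a further, standard, piece of commutator calculus (Philip Hall's collection process). Since all of this is classical and available in the cited references, the actual proof need only invoke it and then carry out the two short deductions above.
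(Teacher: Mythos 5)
Your proposal is correct and in substance matches the paper, which simply cites Hall's classical theory of regular $p$-groups (Corollary 4.13 and Theorem 4.26 of \cite{Hall}) for the first assertion; your route through the equivalent fact that $\Omega_n(G)=\{x\mid x^{p^n}=1\}$ is a subgroup, followed by the two short formal deductions, relies on exactly the same classical input. The only caveat is that your sketched induction for the $\Omega_n$-subgroup property is not a complete argument, but since you (like the paper) explicitly defer that step to the cited references, nothing is missing.
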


We will also make use of higher commutators and their role in calculating powers of products of group elements.
\begin{defi}
Let $G$ be a finite group and $a,b\in G$. Denote by $[a,b]:=a^{-1}b^{-1}ab$ the commutator of $a$ and $b$. Iteratively, a {\it commutator of weight $i$} in $a$ and $b$ is defined as follows:
\begin{itemize}
\item The commutators of weight $1$ are $a$ and $b$.
\item The commutators of weight $i\ge 2$ are $[x,y]$ where $x$ and $y$ are commutators of weight $j$ and $i-j$ for some $j\in \{1,...,i-1\}$.
\end{itemize}
\end{defi}

The following is Theorem 3.1 in \cite{Hall}.
\begin{thm}[P.\ Hall]
\label{thm:hall}
Let $G$ be a finite group and $a,b\in G$. 
For $i\in \nn$ denote by $R_{i,j}$ the iterated commutators of weight $i$ in $a$ and $b$ ($j\in \{1,...,n_i\}$ for some $n_i\in \nn$), in some prescribed order.
Then there exist polynomial functions $f_{i,j}$ such that for all $n\in \nn$:
$$(ab)^n = a^n b^n \prod_{i\ge 2}\prod_{j=1}^{n_i} R_{i,j}^{f_{i,j}(n)}.\footnote{The product over $i\ge 2$ is a priori infinite, and should be interpreted as ``$\prod_{2\le i <N} (...)$ times an element of the group generated by weight $N$ commutators", for arbitrarily chosen $N\in \mathbb{N}$. In groups where all suitably high commutators vanish (such as nilpotent groups), there is no ambiguity in the notation.}$$
More precisely, $f_{i,j}$ is an integer linear combination of the polynomials $\binom{X}{1},...,\binom{X}{i}$, where $\binom{X}{d}:=\frac{X(X-1)\cdots (X-d+1)}{d!}$.
\end{thm}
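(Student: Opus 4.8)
The plan is to reduce to a finitely generated nilpotent group and then induct on the nilpotency class, at each step splitting off the central top term $\gamma_c$ of the lower central series and recognising the resulting correction term as a polynomial function of $n$ valued in a finitely generated abelian group; the integrality and the vanishing at $0$ then come for free.

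First, since only $\langle a,b\rangle$ and its iterated commutators appear, I would assume $G=\langle a,b\rangle$. Fixing $N\in\nn$ and passing to $G/\gamma_{N+1}(G)$ — where $\gamma_k$ is the $k$-th term of the lower central series, so that all iterated commutators of weight $>N$ become trivial — it suffices, in view of the interpretation of the infinite product explained in the footnote, to prove the identity in each such finitely generated nilpotent quotient. So assume $G$ is nilpotent, say of class $c$, and induct on $c$. The case $c=1$ is the abelian identity $(ab)^n=a^nb^n$. For the inductive step put $Z:=\gamma_c(G)\le Z(G)$; then $G/Z$ has class $\le c-1$, so by induction there are integer-valued polynomials $f_{i,j}$ of degree $\le i$ with $f_{i,j}(0)=0$ (for $2\le i\le c-1$) such that, writing $W_n:=\prod_{i=2}^{c-1}\prod_j R_{i,j}^{f_{i,j}(n)}\in G$,
$$(ab)^n=a^nb^nW_nz_n,\qquad z_n:=W_n^{-1}b^{-n}a^{-n}(ab)^n\in Z,\qquad z_0=1.$$

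Comparing $(ab)^{n+1}=(ab)^n\cdot ab$ with this collected form and using that $Z$ is central yields a recursion $z_{n+1}=z_nu_n$, where $u_n=W_{n+1}^{-1}b^{-n-1}a^{-1}b^nW_n\,ab\in Z$. The crux is that $n\mapsto u_n$ is a polynomial function, valued in $Z$, of degree $\le c-1$. Granting this, $z_n=u_0u_1\cdots u_{n-1}$ is a polynomial function of degree $\le c$ with $z_0=1$, so by the forward-difference formula $z_n=\prod_{d=1}^{c}v_d^{\binom{n}{d}}$ for suitable $v_d\in Z$. Since $Z=\gamma_c(G)$ is generated by the iterated commutators $R_{c,j}$ of weight $c$ in $a,b$, expressing each $v_d$ in these generators rewrites this as $z_n=\prod_j R_{c,j}^{f_{c,j}(n)}$ with each $f_{c,j}$ an integer combination of $\binom{X}{1},\dots,\binom{X}{c}$ vanishing at $0$. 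Combined with the inductive data for weights $<c$ — and using that the central $R_{c,j}$ commute, so their order is immaterial — this is the asserted identity for $G$.

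The step I expect to be the genuine obstacle is establishing that $u_n$ is polynomial of degree $\le c-1$. Evaluating $u_n$ means carrying the trailing factor $ab$ past $b^n$ and past each $R_{i,j}^{f_{i,j}(n)}$ occurring in $W_n$ into collected position; via the identities $x^y=x[x,y]$ and $[xy,z]=[x,z]^y[y,z]$ this spawns a cascade of higher-weight commutators, and one must verify that the exponents so produced stay polynomial in $n$ of degree at most the weight of the commutator they multiply. This is precisely P.\ Hall's \emph{collection process} (\cite{Hall}), and to keep the argument non-circular it should be organised by a secondary, downward induction on weight. The degree bookkeeping itself is clean once one observes that pushing $a$ or $b$ past a weight-$w$ commutator affects only commutators of weight $>w$, and feeds the weight-$(w+1)$ layer an exponent \emph{linear} in the exponent of the weight-$w$ commutator (and, for the opening move of $a$ past $b^n$, linear in $n$ and landing in weight $2$). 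Tracing this through, every exponent fed into the weight-$c$ layer of $u_n$ — whether an (essentially unchanged) weight-$(c-1)$ exponent $f_{c-1,j}(n\pm1)$, a product such as $n\cdot f_{c-2,j}(n)$, or similar — has degree $\le c-1$ in $n$; hence $u_n$, and therefore $z_n$, have the claimed degrees.
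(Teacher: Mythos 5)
The paper does not actually prove this statement: it is quoted verbatim as Theorem 3.1 of Hall's 1934 paper and used as a black box, so there is no internal proof to compare against. Measured against Hall's original argument, your route is genuinely different: Hall collects the word $(ab)(ab)\cdots(ab)$ directly and obtains the exponent of each basic commutator of weight $w$ by a combinatorial count of configurations, each contributing a product of binomial coefficients of total degree $\le w$; you instead induct on the nilpotency class of $\langle a,b\rangle/\gamma_{N+1}$, split off the central term $Z=\gamma_c$, and recover the weight-$c$ exponents by discrete integration of the increment $u_n=z_{n+1}z_n^{-1}$. Your reduction, base case, the identification of $\gamma_c$ as generated by the weight-$c$ commutators in $a,b$, and the passage from ``$u_n$ polynomial of degree $\le c-1$'' to ``$z_n=\prod_d v_d^{\binom{n}{d}}$'' are all correct, and the approach does buy something: it isolates all the combinatorial difficulty into the single claim about $u_n$. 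That claim is where your sketch is thinnest, and I would flag one point you should make explicit when fleshing it out: when two collected factors $R^{\alpha}$, $R'^{\beta}$ of weights $i,i'$ with $i+i'=c$ are swapped, the spawned weight-$c$ term acquires exponent $\alpha\beta$, which has degree $i+i'=c$ if \emph{both} exponents have full degree --- this would destroy the bound $\deg u_n\le c-1$. The argument survives because the only full-degree factors are the original factors of $W_n$ (and $b^n$), which are already in collected order and hence never swapped past one another; every swap that actually occurs involves a single letter or a previously spawned factor, whose exponent has degree $\le$ weight $-\,1$, and the weight-$(<c)$ parts of $W_{n+1}^{-1}$ and of the collected $b^{-n-1}a^{-1}b^nW_nab$ then cancel by telescoping rather than by further rearrangement. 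With that bookkeeping made precise (your ``secondary downward induction on weight''), the proof is complete; a quick check in class $3$, where $u_n=[[b,a],b]^{\,n+2\binom{n}{2}}[[b,a],a]^{\binom{n}{2}}$ indeed has degree $2=c-1$, confirms the mechanism.
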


We set $G_1:=G$ and iteratively $G_d:=[G,G_{d-1}]$ for every $d\ge 2$. In particular $G_2 = [G,G] = G'$ is the commutator subgroup of $G$, and $G=G_1>G_2>\dots$ is the {\it lower central series} of $G$. In particular, if $G$ is nilpotent of class $c$, then $G_{c+1}=\{1\}$. Following \cite[Theorems 2.51 and 2.53]{Hall}, one has:
\begin{lemma}
\label{lem:highcomm}
For all $i,j\ge 1$, one has $[G_i, G_j] \le G_{i+j}$. In particular, every weight $i$ commutator of $G$ is contained in $G_i$.
\end{lemma}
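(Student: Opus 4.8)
The plan is to establish the inclusion $[G_i,G_j]\le G_{i+j}$ first, by induction on $j$ with an inductive hypothesis phrased uniformly for \emph{all} $i\ge 1$, and then to deduce the assertion on weight-$i$ commutators by a short separate induction. The base case $j=1$ is immediate from the definition of the lower central series, since $[G_i,G_1]=[G_i,G]=G_{i+1}$. The content is entirely in the inductive step, and the tool I would use there is the three subgroups lemma (a standard consequence of the Hall--Witt identity): for subgroups $A,B,C$ of a group and a normal subgroup $N$, if $[[A,B],C]\le N$ and $[[B,C],A]\le N$, then $[[C,A],B]\le N$.

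For the step, assume $[G_i,G_j]\le G_{i+j}$ holds for every $i\ge 1$, fix $i$, and aim to show $[G_i,G_{j+1}]\le G_{i+j+1}$. Apply the three subgroups lemma with $A=G_j$, $B=G_i$, $C=G$, and $N=G_{i+j+1}$, which is characteristic (hence normal) in $G$. The first hypothesis $[[G_j,G_i],G]\le G_{i+j+1}$ holds because $[G_j,G_i]\le G_{i+j}$ by the inductive hypothesis and $[G_{i+j},G]=G_{i+j+1}$ by definition; the second hypothesis $[[G_i,G],G_j]=[G_{i+1},G_j]\le G_{i+j+1}$ is the inductive hypothesis applied with $i+1$ in place of $i$. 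The conclusion is $[[G,G_j],G_i]\le G_{i+j+1}$, and since $[G,G_j]=G_{j+1}$ this reads $[G_i,G_{j+1}]\le G_{i+j+1}$, as desired.

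For the final claim I would induct on the weight $i$: a commutator of weight $1$ is $a$ or $b$, which lies in $G=G_1$; a commutator of weight $i\ge 2$ has the form $[x,y]$ with $x$ of weight $\ell$ and $y$ of weight $i-\ell$ for some $1\le\ell\le i-1$, and by induction $x\in G_\ell$, $y\in G_{i-\ell}$, so $[x,y]\in[G_\ell,G_{i-\ell}]\le G_i$ by the first part.

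There is no genuine obstacle here — the whole argument is classical — and the only point requiring care is the bookkeeping in the three subgroups lemma, i.e.\ assigning the roles of $A,B,C$ so that the two hypotheses to be verified become exactly the (uniform-in-$i$) inductive hypothesis and the defining relation of the lower central series, while the conclusion is the target inclusion. Alternatively, one may simply cite Hall's book, where the statement is contained in Theorems 2.51 and 2.53.
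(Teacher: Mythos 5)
Your proof is correct. The paper itself gives no argument for this lemma --- it simply cites Theorems 2.51 and 2.53 of Hall's paper --- so there is nothing to compare step by step; what you have written is precisely the classical proof contained in that reference. The bookkeeping in your application of the three subgroups lemma checks out: with $A=G_j$, $B=G_i$, $C=G$ and $N=G_{i+j+1}$ (normal since the terms of the lower central series are characteristic), the two hypotheses reduce to the uniform-in-$i$ inductive hypothesis together with the defining relation $[G_{i+j},G]=G_{i+j+1}$, and the conclusion $[G_i,G_{j+1}]\le G_{i+j+1}$ follows since $[A,B]=[B,A]$ as subgroups. The deduction of the statement on weight-$i$ commutators by induction on the weight is likewise exactly what is needed.
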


Finally, we include a number theoretic lemma which ensures $d'(\mathbb{Q},G)\le e(\mathbb{Q},G)$ under certain extra conditions.
\begin{lemma}
\label{lem:unram}
Let $G$ be the Galois group of a tamely ramified extension $F/\mathbb{Q}$ all of whose decomposition groups are abelian. Then $G$ occurs as the Galois group of an unramified extension of some cyclic number field $L$. Moreover, let $m$ denote the least common multiple of all ramification indices at ramified primes in $F/\mathbb{Q}$. Then one may choose $L$ such that $[L:\mathbb{Q}]\le m$. 
 \end{lemma}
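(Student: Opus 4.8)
The plan is to obtain the desired $G$-extension of $L$ by base-changing $F$ along a carefully chosen cyclic field $L/\mathbb{Q}$ and invoking Abhyankar's lemma; the hypothesis that all decomposition groups of $F/\mathbb{Q}$ are abelian is exactly what makes it possible to take $[L:\mathbb{Q}]$ as small as $m$. Write $p_1,\dots,p_r$ for the primes ramified in $F/\mathbb{Q}$ and $e_1,\dots,e_r$ for the corresponding (tame) ramification indices, so $m=\mathrm{lcm}(e_1,\dots,e_r)$.

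The first and decisive step is the observation that $e_i\mid p_i-1$ for each $i$. Indeed, the inertia group $I_i\le D_i$ at a prime of $F$ above $p_i$ is cyclic of order $e_i$ by tameness, and a Frobenius lift in the decomposition group $D_i$ acts on $I_i$ by $\sigma\mapsto\sigma^{p_i}$; since $D_i$ is abelian this action is trivial, so $\sigma^{p_i-1}=1$ for all $\sigma\in I_i$, whence $e_i\mid p_i-1$. In particular $p_i\nmid e_i$, so the field $L$ constructed below will be tamely ramified at $p_i$ as well.

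Next I would construct $L$. By Dirichlet's theorem choose a prime $q\equiv 1\pmod m$ with $q\notin\{p_1,\dots,p_r\}$, and set $N:=q\,p_1\cdots p_r$. Using the isomorphism $(\mathbb{Z}/N\mathbb{Z})^{*}\cong(\mathbb{Z}/q\mathbb{Z})^{*}\times\prod_i(\mathbb{Z}/p_i\mathbb{Z})^{*}$, together with $m\mid q-1$ and $e_i\mid p_i-1$, choose a Dirichlet character $\chi$ modulo $N$ whose component on $(\mathbb{Z}/q\mathbb{Z})^{*}$ has order $m$ and whose component on each $(\mathbb{Z}/p_i\mathbb{Z})^{*}$ has order $e_i$. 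Since every $e_i$ divides $m$, the character $\chi$ has order exactly $m$, so its fixed field $L\subseteq\mathbb{Q}(\zeta_N)$ is cyclic over $\mathbb{Q}$ of degree $m$; moreover $L$ is ramified precisely at $q$ and the $p_i$, with $L/\mathbb{Q}$ totally (and tamely) ramified at $q$ and of ramification index exactly $e_i$ at $p_i$. (If $F$ is not totally real one further arranges $L$ to be totally imaginary, so as to control the archimedean places; and if $m=1$ then $F=\mathbb{Q}$ and there is nothing to prove.)

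It remains to verify that $FL/L$ is an unramified $G$-extension. The intersection $F\cap L$ is at once a subfield of $F$, hence unramified at $q$, and a subfield of $L$, whose only subfield unramified at $q$ is $\mathbb{Q}$ by total ramification at $q$; thus $F\cap L=\mathbb{Q}$ and $\mathrm{Gal}(FL/L)\cong\mathrm{Gal}(F/\mathbb{Q})=G$. For unramifiedness, the only primes of $L$ that can ramify in $FL/L$ lie over $q$ or over some $p_i$: over $q$, the extension $F/\mathbb{Q}$ is unramified, so $FL/L$ is unramified there by multiplicativity of ramification indices in the tower $\mathbb{Q}\subseteq F\subseteq FL$; over $p_i$, both $F/\mathbb{Q}$ and $L/\mathbb{Q}$ are tamely ramified with ramification index $e_i$, so $FL/L$ is unramified there by Abhyankar's lemma; and the archimedean places are handled by the choice of $L$. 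Hence $FL/L$ is an unramified extension with group $G$ of the cyclic field $L$, and $[L:\mathbb{Q}]=m\le m$. The main obstacle is precisely the structural fact $e_i\mid p_i-1$ of the second step: without it the local ramification data cannot be packaged into a single cyclic character of order $m$, and one is forced instead to a merely abelian field of possibly much larger degree — which is the reason the general bound reads $d'(\mathbb{Q},G)\le\exp(G)$.
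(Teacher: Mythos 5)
Your proof is correct; the paper's own ``proof'' of this lemma is merely a citation to Lemma 4.5 of \cite{KK}, and your argument --- extracting $e_i\mid p_i-1$ from the abelianness of the decomposition groups via the tame relation $\varphi\sigma\varphi^{-1}=\sigma^{p_i}$, assembling a cyclic degree-$m$ subfield of $\mathbb{Q}(\zeta_{qp_1\cdots p_r})$ whose local ramification index at each $p_i$ is exactly $e_i$ (with an auxiliary totally ramified prime $q\equiv 1\bmod m$ forcing $F\cap L=\mathbb{Q}$), and then killing the ramification with Abhyankar's lemma --- is precisely the standard argument behind that citation. The only loose end is the archimedean places: if ``unramified'' were meant to include them, your fix of taking $L$ totally imaginary fails when $m$ is odd, but under the convention in force here (unramification at the finite primes) your proof is complete.
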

 \begin{proof}
 This is Lemma 4.5 in \cite{KK}.
 \end{proof}

\section{Shafarevich's method and the constant $r(G)$}
 The following deep result, due to Shafarevich (see, e.g., \cite[Chapter IX.6]{NSW}), solves the inverse Galois problem for solvable groups.
\begin{thm}[Shafarevich]
\label{thm:shaf}
Let $G$ be a finite solvable group and $K$ a number field. 
Then there are infinitely many Galois extensions $L/K$ with group $G$ fulfilling the following:
\begin{itemize}
\item[i)] $L/K$ is tamely ramified.
\item[ii)] All decomposition groups at ramified primes in $L/K$ are cyclic and equal the respective inertia groups.
\end{itemize}
\end{thm}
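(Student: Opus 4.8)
The plan is to argue by induction on $|G|$, the case $|G|=1$ being vacuous. For $|G|>1$ choose a minimal normal subgroup $A\trianglelefteq G$; since $G$ is solvable, $A$ is elementary abelian, say a $p$-group, and we set $\bar G:=G/A$. By the inductive hypothesis there are infinitely many tamely ramified $\bar G$-extensions $\bar L/K$ in which, at every ramified prime, the decomposition group equals the inertia group and both are cyclic. Fix one such $\bar L/K$. The task then reduces to solving the Galois embedding problem $1\to A\to G\to \bar G\to 1$ over $K$ with a solution field $L\supseteq\bar L$, doing so in a way that keeps $L/K$ tame with cyclic decomposition $=$ inertia at each ramified prime; the inductive step is completed by pushing this through. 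Throughout, the freedom exploited is that we are allowed to enlarge the ramification set, and the extra ramification will be placed at auxiliary primes $q$ of $K$ chosen (i) prime to $p$ and to the primes ramified in $\bar L/K$, and (ii) splitting completely in $\bar L$. Condition (ii) forces the Frobenius of any prime of $L$ above $q$ to lie in the inertia group of $L/\bar L$ at that prime, so its decomposition group in $L/K$ is generated by inertia — hence cyclic and equal to inertia, as required; tameness is automatic from (i) once the local extension at $q$ is taken to be a tame (totally, cyclically ramified) extension.

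I would first dispose of the non-Frattini case $A\not\subseteq\Phi(G)$. Minimality of $A$ then gives a complement: there is a maximal subgroup $M$ with $G=AM$ and $A\cap M\trianglelefteq G$, whence $A\cap M=1$ and $G\cong A\rtimes\bar G$, so the embedding problem is split. Split embedding problems with abelian kernel over a number field are solvable, and moreover can be solved properly (with genuine surjection onto $G$) after twisting the $\bar G$-module $A$ suitably and prescribing the local behaviour at finitely many auxiliary primes $q$ as above, where one asks the local extension to realise a chosen generator of $A$ as a tame inertia generator. Tracking $H^1(G_K,A)$ through the relevant exact sequences shows there is enough room to meet these conditions, and properness is arranged in the usual way by forcing several distinct prescribed local inertia elements that together generate $A$.

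The heart of the matter is the Frattini case $A\subseteq\Phi(G)$, where the embedding problem is non-split and carries a cohomological obstruction, the image of the relevant class of $H^2(\bar G,A)$ in $H^2(G_K,A)$ (with $G_K=\mathrm{Gal}(\overline K/K)$). Here I would invoke Shafarevich's key contribution: using the local–global (Poitou–Tate / Brauer-group) description of $H^2(G_K,A)$ and its localisations, one can annihilate this obstruction by modifying the global class, again via a prescription of local behaviour at a finite set of auxiliary primes chosen as in the first paragraph. Once the obstruction vanishes the embedding problem becomes solvable, and among its solutions one selects one whose new ramification sits entirely at these tame, split-in-$\bar L$ auxiliary primes, so that $L/K$ is tame with cyclic decomposition $=$ inertia at all ramified primes. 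The ``infinitely many'' clause then follows since at each stage both the base extension $\bar L/K$ and the auxiliary primes can be varied, producing infinitely many pairwise distinct $L$.

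I expect the genuine obstacle to be exactly that last step: the trivialisation of the $H^2$-obstruction in the Frattini ($p$-embedding) case. This requires delicate control of local Galois modules and of the associated Brauer obstruction — and in particular the prime $p=2$, where Shafarevich's original argument needed later correction, demands extra care. Everything else (the splitting in the non-Frattini case, the cohomological bookkeeping for properness, and the choice of auxiliary primes enforcing tameness and cyclic decomposition groups) is, by comparison, routine.
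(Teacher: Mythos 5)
There is a genuine gap, and it is exactly where you suspect it is. Your induction via a \emph{minimal} normal subgroup $A$ forces you into the Frattini case $A\subseteq\Phi(G)$, i.e.\ into a non-split embedding problem, and your proposed resolution --- ``annihilate the obstruction in $H^2(G_K,A)$ by modifying the global class at auxiliary primes'' --- is not an argument. The obstruction is the image of a fixed class in $H^2(\bar G,A)$ under inflation along the chosen surjection $G_K\twoheadrightarrow\bar G$; one can only alter it by changing that surjection (twisting the solution $\bar L$), and showing that some twist kills it while preserving tameness and the decomposition conditions is precisely the hard open-ended part you have not supplied. Non-split embedding problems with abelian kernel over number fields are \emph{not} solvable in general, and Shafarevich's theorem is not proved by confronting them.

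The actual proof (sketched in the paper after the theorem statement, carried out in \cite[Ch.~IX.6]{NSW}) avoids the non-split case entirely. Instead of a minimal normal subgroup one takes $N$ to be the Fitting subgroup of $G$: this is never contained in $\Phi(G)$ for nontrivial solvable $G$ (\cite[Prop.~9.6.9]{NSW}), so it admits a \emph{partial} complement, i.e.\ a proper subgroup $U<G$ with $NU=G$. Then $G$ is a quotient of the split extension $N\rtimes U$, and the only analytic input needed is the proper solvability of \emph{split} embedding problems with \emph{nilpotent} kernel with prescribed local behaviour (\cite[Thm.~9.6.7]{NSW}): one realizes $U$ by induction on the group order (using $|U|<|G|$), solves the split problem so that the old ramified primes split completely in the new layer and the new ramified primes are tame with cyclic decomposition equal to inertia, and passes to the quotient $G$, noting that properties i) and ii) are preserved under quotients. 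Your non-Frattini paragraph is essentially the $N\rtimes U$ step in the special case of an honest complement, but the structural insight that makes the whole induction close --- partial complements for the Fitting subgroup, so that one never meets a Frattini kernel --- is missing, and without it your proof does not go through.
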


Since decomposition groups at unramified primes are automatically cyclic, Theorem \ref{thm:shaf} together with Lemma \ref{lem:unram} regain immediately the bound $d'(\mathbb{Q},G)\le \exp(G)$ for all solvable groups $G$. In order to improve on this bound and move towards the proof of Theorem \ref{thm:main}, we recall Shafarevich's method in more detail.

Firstly, at the heart of Shafarevich's proof of Theorem \ref{thm:shaf} is a result on solvability of split embedding problems with nilpotent kernel (see \cite[Theorem 9.6.7]{NSW}), which, given a Galois extension $L/K$ with group $H$, guarantees the existence of a $N\rtimes H$-extension $F/K$ containing $L/K$ such that all ramified primes of $L/K$ split completely in $F/L$ and all ramified primes of $F/L$ have cyclic decomposition groups equal to the respective inertia group in $F/K$. 

Next, given any solvable group $G$ and normal subgroup $N\triangleleft G$, call a proper subgroup $U< G$ a {\it partial complement} for $N$ if $NU=G$. Note that in this case $G$ necessarily occurs as a quotient of a suitable semidirect product $N\rtimes U$. Partial complements exist for all normal subgroups $N$ not contained in the Frattini subgroup of $G$ (\cite[Prop.\ 9.6.8]{NSW}). In particular, the Fitting subgroup $F(G)$, defined as the (unique) largest nilpotent normal subgroup of $G$, always has this property (\cite[Prop.\ 9.6.9]{NSW}). Since $|U|<|G|$, Theorem \ref{thm:shaf} is then derived by induction, using that Properties i) and ii) are preserved under taking quotients.

The above motivates the following definition.
\begin{defi}
\label{def:scholzexp}
Let $G$ be a solvable group. 
Set $G_0:=G$. As long as $G_{i-1}\ne \{1\}$, we iteratively define $N_i$ to be a nilpotent normal subgroup of $G_{i-1}$ such that $N_i$ possesses a partial complement $G_i$ in $G_{i-1}$ (i.e. $G_i\ne G_{i-1}$ and $N_iG_i=G_{i-1}$). Let $s$ be minimal such that $G_s=\{1\}$.
For each $i=1,...,s$ denote by $e_i$ the exponent of the group $N_i$.
Define $r(G)$ as $\min \textrm{lcm}(e_1,...,e_{s})$, where the minimum is taken over all series of $(N_i,G_i)_{i=1,...,s}$ as above.
\end{defi}

Note in particular that $r(G)$ divides $\exp(G)$, as it is the least common multiple of certain element orders of $G$.
For many groups $G$, $r(G)$ is actually significantly smaller than $\exp(G)$. For example, let $G = C_p\wr (C_p \wr(...\wr C_p))...)$ be a $k$-fold iterated wreath product of cyclic groups of order $p$. Then $\exp(G)=p^k$, whereas $r(G)=p$. To see the latter, simply write $G=(C_p)^n\rtimes H$ with suitable $n\in \mathbb{N}$, set $N_1:=(C_p)^n$, $G_1:=H$ and note that $\exp(N_1)=p$ and $G_1$ is essentially of the same structure as $G$, so one can proceed by induction.
On the other hand, one always has $r(G)\ge \textrm{ge}(G)$, since  $N_1\cdots N_r = G$.

The following useful inequality is also straightforward from the definition of $r(G)$.
\begin{lemma}
\label{lem:exp_char}
Let $G$ be a $p$-group, $N$ a normal subgroup of $G$ and $U$ a partial complement of $N$ in $G$.
Then $r(G)\le \textrm{lcm}\{\exp(N), r(U)\}$.
\end{lemma}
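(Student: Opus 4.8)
The plan is to unwind the definition of $r(G)$ on the right-hand side and show that the series witnessing it, prepended with one extra step, is a valid series for $G$. First I would observe that since $U$ is a partial complement of $N$ in $G$, the pair $(N_1,G_1):=(N,U)$ is a legitimate first step in a defining series for $r(G)$ in the sense of Definition \ref{def:scholzexp}: $N$ is a nilpotent normal subgroup of $G_0=G$ (being a $p$-group, $N$ is automatically nilpotent), and $U$ is a partial complement, i.e.\ $U\ne G$ and $NU=G$. Here $e_1=\exp(N)$.

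Next I would take a series $(N_i',G_i')_{i=1,\dots,t}$ realizing the minimum in the definition of $r(U)$, so that $G_0'=U$, $G_t'=\{1\}$, and $\textrm{lcm}(\exp(N_1'),\dots,\exp(N_t'))=r(U)$. Concatenating, I set $G_i:=G_{i-1}'$ and $N_i:=N_{i-1}'$ for $i=2,\dots,t+1$, yielding a full series $(N_i,G_i)_{i=1,\dots,t+1}$ from $G$ down to $\{1\}$ of the type allowed in Definition \ref{def:scholzexp}. Since $r(G)$ is defined as a minimum over all such series, we get
$$r(G)\le \textrm{lcm}(e_1,\dots,e_{t+1})=\textrm{lcm}\big(\exp(N),\exp(N_1'),\dots,\exp(N_t')\big)=\textrm{lcm}\{\exp(N),r(U)\},$$
which is exactly the claimed inequality.

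There is essentially no obstacle here; the only point requiring a word of care is that a ``defining series'' for $r(U)$ in the sense of Definition \ref{def:scholzexp} genuinely has the right shape to be spliced on below $(N,U)$ — that is, that the $G_i$ form a descending chain with each $G_i$ a partial complement of a nilpotent normal subgroup $N_i$ of $G_{i-1}$, terminating at $\{1\}$ — which is immediate from the definition. One might also note in passing that every subgroup and subquotient of a $p$-group is again a $p$-group, so $r(U)$ is defined and the inductive structure is consistent, though strictly this is not even needed for the formal inequality.
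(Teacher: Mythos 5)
Your proof is correct and follows exactly the paper's argument: prepend the pair $(N,U)$ as the first step of a defining series for $r(G)$ and splice on an optimal series for $r(U)$, then compare least common multiples. No issues.
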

\begin{proof}
Set $N_1=N$, $G_1 = U$, and continue $(N_1,G_1)$ to a series $((N_i,G_i) \mid i\in \{1,\dots, s\})$ as in Def.\ \ref{def:scholzexp}, and such that the series $((N_i,G_i) \mid i\in \{2,\dots, s\})$ inside $U$ reaches the smallest possible value $r(U)$. Set $e_i = \exp(N_i)$ for $i=1,\dots, s$.
We have $r(U)= \textrm{lcm}(e_2,\dots, e_s)$, and $r(G)\le \textrm{lcm}(e_1,\dots, e_s) = \textrm{lcm}(\exp(N), r(U))$. \end{proof}

\begin{prop}\label{solv_cyclic_decomp}
Let $G$ be a solvable group and $k$ be a number field. Let $((N_i,G_i)\mid i\in \{1,...,s\})$ be any series of nilpotent normal subgroups $N_i$ and partial complements $G_i$ as in Def.\ \ref{def:scholzexp}, and let $e_i = \exp(N_i)$ for $i=1,\dots, s$.
Then there exist infinitely many tamely ramified Galois extensions $F/k$ with group $G$ such that all ramification indices divide $\textrm{lcm}(e_1,\dots, e_s)$, and all decomposition groups at ramified primes are cyclic, equal to the inertia groups.
Moreover, there exist infinitely cyclic Galois extensions $K/k$ of degree $[K:k]\le \textrm{lcm}(e_1,\dots, e_s)$ such that $K$ possesses an unramified $G$-extension. 
In particular, $d'(\mathbb{Q},G)$ and $e(\mathbb{Q},G)$ are bounded from above by $r(G)$.
\end{prop}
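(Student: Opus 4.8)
The plan is to prove Proposition \ref{solv_cyclic_decomp} by induction on the length $s$ of the given series $((N_i,G_i)\mid i\in\{1,\dots,s\})$, following exactly the skeleton of Shafarevich's method as recalled in Section 3. The first (reduced) assertion immediately implies the second via Lemma \ref{lem:unram}: a tamely ramified $G$-extension of $\mathbb{Q}$ whose decomposition groups at ramified primes are cyclic (hence abelian) and whose ramification indices all divide $m:=\textrm{lcm}(e_1,\dots,e_s)$ yields, by that lemma, a cyclic number field $L$ with $[L:\mathbb{Q}]\le m$ possessing an unramified $G$-extension; and since the series was arbitrary, taking the one attaining the minimum gives the bounds $d'(\mathbb{Q},G),e(\mathbb{Q},G)\le r(G)$. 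The phrase ``infinitely many'' is propagated through the induction because each input extension can be chosen in infinitely many ways and distinct inputs produce distinct outputs (or one simply tracks, say, ramification away from a growing finite set of primes).

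For the base case $s=1$ one has $G_1=\{1\}$, hence $N_1=G$ is nilpotent of exponent $e_1$, and the statement is exactly Theorem \ref{thm:shaf} (all ramification indices divide $\exp(G)=e_1$ since inertia groups are cyclic subgroups, hence of order dividing $\exp(G)$; in fact one wants to note they divide $e_1$, which is automatic). For the inductive step $s\ge 2$: since $G=N_1G_1$ with $G_1$ a proper subgroup, $G$ is a quotient of the semidirect product $N_1\rtimes G_1$, and the series $((N_i,G_i)\mid i\in\{2,\dots,s\})$ witnesses a valid choice for $G_1$ with $\textrm{lcm}$-value dividing $\textrm{lcm}(e_2,\dots,e_s)$. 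By the induction hypothesis there is a tamely ramified $G_1$-extension $F/k$ with cyclic decomposition groups equal to inertia groups and all ramification indices dividing $\textrm{lcm}(e_2,\dots,e_s)$. Now invoke the split-embedding-problem result \cite[Theorem 9.6.7]{NSW} (as quoted in Section 3): there exist infinitely many tamely ramified $N_1\rtimes G_1$-extensions $E/k$ containing $F/k$, in which every ramified prime of $F/k$ splits completely in $E/F$ and every decomposition group at a ramified prime of $E/k$ is cyclic, equal to the inertia group, and embeds either into $\mathrm{Gal}(F/k)\cong G_1$ or into $N_1$. Pushing $E/k$ down along the surjection $N_1\rtimes G_1\twoheadrightarrow G$ gives the desired $G$-extension; its inertia groups are quotients — hence images, hence in particular still cyclic — of those of $E/k$, so their orders divide $\textrm{lcm}(\exp(N_1),\textrm{lcm}(e_2,\dots,e_s))=\textrm{lcm}(e_1,\dots,e_s)$, and decomposition groups equal inertia groups is preserved under quotients. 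This completes the induction.

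The one point needing a little care — and the only place I expect any friction — is that passing to the quotient $E\to$ (the subfield fixed by $\ker(N_1\rtimes G_1\to G)$) does not \emph{decrease} a ramification index in a way that violates ``equal to the inertia group'': one must check that taking the image of a decomposition group $D$ and of the inertia subgroup $I\le D$ under a quotient map still gives matching groups, which is immediate since quotient maps are surjective and preserve the inclusion $I\le D$ together with equality $I=D$. Likewise one should remark that tame ramification is preserved under subextensions/quotients. Everything else is a direct assembly of the cited black boxes, so no serious obstacle arises; the proposition is essentially a bookkeeping refinement of the classical derivation of $d'(\mathbb{Q},G)\le\exp(G)$, with $\exp(G)$ replaced by the sharper invariant $r(G)$ coming from an optimal choice of the Shafarevich series.
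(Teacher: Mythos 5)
Your proposal is correct and follows essentially the same route as the paper's own proof: induction on $s$, with Theorem \ref{thm:shaf} as the base case, the split embedding problem result \cite[Theorem 9.6.7]{NSW} for the inductive step, passage to the quotient $G$ of $N_1\rtimes G_1$, and Lemma \ref{lem:unram} for the second assertion. The extra care you take in verifying that cyclicity and the equality of decomposition and inertia groups pass to quotients is a point the paper leaves implicit, but it is the same argument.
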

\begin{proof}
It suffices to prove the first assertion, since the second one follows from Lemma \ref{lem:unram}, and the last one is immediate from the definition of $r(G)$.
Proceed by induction over $s$.\\
If $s=1$, then $G$ is nilpotent of exponent $e_1$, and the assertion is immediate from Theorem \ref{thm:shaf}.
Now, let $s\ge 2$. Then $G=N_1G_1$ is a quotient of some semidirect product $N_1\rtimes G_1$. 
Note that $((N_i,G_i)\mid i\in \{2,...,s\})$ is a series as in Def.\ \ref{def:scholzexp} for the group $G_1$.
Thus, we may inductively assume the existence of a $G_1$-extension $F/k$ yielding the assertion for $G_1$.
By \cite[Theorem 9.6.7]{NSW}, there exist infinitely many tamely ramified Galois extensions $E/k$ with group $N_1\rtimes G_1$ such that all decomposition groups at ramified primes are cyclic, equal to the respective inertia groups, and embed either into $Gal(F/k)$ or into $N_1$.
Thus, all ramification indices in $E/k$, and a fortiori in its $G$-subextension, divide $\textrm{lcm}(e_1, \textrm{lcm}(e_2,\dots, e_s))$. This completes the proof.%
\end{proof}

\section{Groups satisfying $r(G) = ge(G)$}
\subsection{Compatibility with taking direct products and wreath products}
\label{sec:compat}
Proposition \ref{solv_cyclic_decomp} shows that the Shafarevich method yields the constant $r(G)$, rather than the in general larger $\exp(G)$, as an upper bound for $e(\mathbb{Q},G)$ and $d'(\mathbb{Q},G)$. However, the true value of $r(G)$ is usually hard to determine directly from its definition. We therefore aim at exhibiting examples in which $r(G)=\textrm{ge}(G)$, thus providing a positive answer to Questions \ref{conj:e3} and \ref{conj:d} for $G$. 
We begin with a simple, but useful observation.
\begin{lemma}
\label{lem:dirprod}
Let $G = G_1\times \cdots \times G_n$ be solvable, and assume $\textrm{ge}(G_i) = r(G_i)$ for all $i=1,\dots, n$. Then $\textrm{ge}(G) = r(G)$.
\end{lemma}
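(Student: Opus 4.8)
The plan is to sandwich $\textrm{ge}(G)$ between two quantities that, under the hypothesis, both equal $\textrm{lcm}(r(G_1),\dots,r(G_n))$. Note first that each factor $G_i$, being a quotient of the solvable group $G$, is solvable, so $r(G_i)$ is defined; and the universal inequality $\textrm{ge}(G)\le r(G)$ has already been recorded above. It therefore suffices to prove the two estimates $\textrm{lcm}(\textrm{ge}(G_1),\dots,\textrm{ge}(G_n))\mid\textrm{ge}(G)$ and $r(G)\le\textrm{lcm}(r(G_1),\dots,r(G_n))$: combined with the hypothesis $\textrm{ge}(G_i)=r(G_i)$ they yield the chain $\textrm{lcm}_i\,\textrm{ge}(G_i)\le\textrm{ge}(G)\le r(G)\le\textrm{lcm}_i\,r(G_i)=\textrm{lcm}_i\,\textrm{ge}(G_i)$, forcing equality throughout, and in particular $\textrm{ge}(G)=r(G)$.

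For the first estimate I would use the coordinate projections $\pi_i\colon G\to G_i$. If $S\subseteq G$ is a generating set with $\textrm{lcm}\{ord(x):x\in S\}=\textrm{ge}(G)$, then $\pi_i(S)$ generates $G_i$ and $ord(\pi_i(x))\mid ord(x)$ for each $x\in S$, whence $\textrm{ge}(G_i)\mid\textrm{lcm}\{ord(x):x\in S\}=\textrm{ge}(G)$; taking the least common multiple over $i$ gives the claim.

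The substantive point is the estimate $r(G_1\times\cdots\times G_n)\le\textrm{lcm}_i\,r(G_i)$, which I would prove by concatenating optimal series for the factors into a single series for $G$ as in Definition \ref{def:scholzexp}. For each $i$ with $G_i\ne\{1\}$ fix a series $(N^{(i)}_j,H^{(i)}_j)_{j=1}^{s_i}$ for $G_i$ (so $H^{(i)}_0=G_i$, $H^{(i)}_{s_i}=\{1\}$) achieving $r(G_i)=\textrm{lcm}_j\exp(N^{(i)}_j)$, and for $G_i=\{1\}$ take the empty series. Then form the descending chain for $G$ that first contracts the $G_1$-coordinate to $\{1\}$, then the $G_2$-coordinate, and so on: at the step corresponding to the pair $(N^{(i)}_j,H^{(i)}_j)$ the current group has the form $\{1\}\times\cdots\times\{1\}\times H^{(i)}_{j-1}\times G_{i+1}\times\cdots\times G_n$, and one takes the normal subgroup supported in coordinate $i$ and equal to $N^{(i)}_j$ there (nilpotent, since $N^{(i)}_j$ is), together with the proper subgroup obtained by replacing $H^{(i)}_{j-1}$ by $H^{(i)}_j$; this is a partial complement since $H^{(i)}_j<H^{(i)}_{j-1}$ and the product of the two equals the current group. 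This is visibly a valid series as in Definition \ref{def:scholzexp}, it terminates at $\{1\}$, and its list of nilpotent pieces is, up to the obvious identification, exactly $\{N^{(i)}_j\}_{i,j}$; hence the least common multiple of their exponents equals $\textrm{lcm}_i\,\textrm{lcm}_j\exp(N^{(i)}_j)=\textrm{lcm}_i\,r(G_i)$, and so $r(G)\le\textrm{lcm}_i\,r(G_i)$.

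I expect the only genuinely delicate point to be resisting the temptation to argue purely by inequalities between the invariants $r(\cdot)$ of various partial complements---for instance via an induction built on Lemma \ref{lem:exp_char}. Because $r$ is a \emph{minimum} of least common multiples, such bounds do not recombine correctly under $\textrm{lcm}$ (an inequality $\textrm{lcm}(a,b)\le M$ does not force $\textrm{lcm}(a,b,c)\le\textrm{lcm}(M,c)$). Assembling one explicit series for $G$ out of the chosen series for the $G_i$ sidesteps this entirely; the only thing left to verify is that a direct product of a nilpotent normal subgroup with full direct factors is again a nilpotent normal subgroup, and that the partial-complement property is preserved coordinatewise, both of which are immediate.
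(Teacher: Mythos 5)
Your proof is correct and follows essentially the same strategy as the paper: compute $\textrm{ge}(G)$ via projections and unions of generating sets, and bound $r(G)$ by $\textrm{lcm}_i\, r(G_i)$ by assembling an explicit series for $G$ as in Definition \ref{def:scholzexp} out of optimal series for the factors. The only (immaterial) difference is that you concatenate the factor series coordinate by coordinate, whereas the paper runs them in parallel, taking $\prod_{j} N_{ij}$ and $\prod_{j} G_{ij}$ at step $i$ after padding all series to a common length; both constructions yield the same multiset of exponents and hence the same bound.
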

\begin{proof} Since each generating set of $G$ projects to a generating set of each $G_i$, and conversely the union of generating sets for each $G_i$ forms a generating set for $G$, one has $\textrm{ge}(G) = \textrm{lcm}(\textrm{ge}(G_1),\dots, \textrm{ge}(G_n))$. 

Regarding $r(G)$, let $((N_{ij},G_{ij})\mid i\in \{1,\dots ,s\})$ be a series of normal subgroups and partial complements inside $G_j$ as in Definition \ref{def:scholzexp} (assumed of the same length $s$ independent of $j$, via adding trivial subgroups if necessary), reaching the minimum value $r(G_j)$ for $j=1,\dots, n$, then $((\prod_{j=1}^n N_{ij}, \prod_{j=1}^n G_{ij}), \mid i\in \{1,\dots, s\})$ reaches the value $\textrm{lcm}(\exp(\prod_{j=1}^n N_{1j}), \dots, \exp(\prod_{j=1}^n N_{sj})) = \textrm{lcm} \{\exp(N_{i,j})\mid i=1,\dots, s; j=1,\dots, n\} = \textrm{lcm}(r(G_1),\dots, r(G_n))$. In particular, this shows $r(G) \le \textrm{lcm}(r(G_1),\dots, r(G_n)) = \textrm{lcm}(\textrm{ge}(G_1),\dots, \textrm{ge}(G_n)) = \textrm{ge}(G)$. Since always $\textrm{ge}(G) \le r(G)$, the assertion follows.
\end{proof}

In other words, the equality $r(G) = \textrm{ge}(G)$ is well-behaved under taking direct products. It is also well-behaved under taking wreath products, at least under some technical assumptions.

\begin{lemma}
\label{lem:wreath}
Let $G$ and $H$ be solvable groups, with $H$ embedded into $S_n$, and let $\Gamma = G\wr H = G^n \rtimes H$, with $H$ acting by permuting the $n$ copies of $G$. If $\textrm{ge}(G) = r(G)$ and $\textrm{ge}(H) = r(H)$, then $\textrm{ge}(\Gamma) = r(\Gamma)$, provided that at least one of the following is fulfilled.
\begin{itemize}
\item[a)] $\textrm{ge}(G)$ divides $\textrm{ge}(H)$.
\item[b)] $G$ has a cyclic quotient $C$ of order $\textrm{ge}(G)$.\footnote{This is automatic e.g.\ if $G$ is abelian; but also, e.g., if $\textrm{ge}(G)$ is a prime.}
\end{itemize}
\end{lemma}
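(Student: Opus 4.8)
The plan is to reduce the claim to computing $r(\Gamma)$ from above, since the inequality $\textrm{ge}(\Gamma)\le r(\Gamma)$ always holds. First I would record the easy lower‑bound side: a generating set of $\Gamma=G^n\rtimes H$ projects onto a generating set of $H$, so $\textrm{ge}(H)\mid \textrm{ge}(\Gamma)$; and restricting to the (normal) base group $G^n$, any generating set must, together with elements of $H$, generate the first copy of $G$, which forces $\textrm{ge}(G)\mid\textrm{ge}(\Gamma)$. Hence $\textrm{lcm}(\textrm{ge}(G),\textrm{ge}(H))\mid\textrm{ge}(\Gamma)$. Conversely I would exhibit a generating set of $\Gamma$ whose element orders have $\textrm{lcm}$ equal to $\textrm{lcm}(\textrm{ge}(G),\textrm{ge}(H))$: take a generating set $S_H$ of $H$ realizing $\textrm{ge}(H)$, together with a generating set $S_G$ of the first copy of $G$ realizing $\textrm{ge}(G)$. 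Since $H$ acts transitively (or at least, the $H$‑conjugates of the first copy cover all copies — one must be slightly careful when $H\le S_n$ is not transitive, in which case one picks one $S_G$ per orbit), the $H$‑conjugates of $S_G$ hit all copies of $G$, so $S_G\cup S_H$ generates $\Gamma$; its element orders divide $\textrm{lcm}(\textrm{ge}(G),\textrm{ge}(H))$. Thus $\textrm{ge}(\Gamma)=\textrm{lcm}(\textrm{ge}(G),\textrm{ge}(H))=\textrm{lcm}(r(G),r(H))$ under the hypotheses.

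Next I would bound $r(\Gamma)$ from above using Lemma \ref{lem:exp_char} and Proposition \ref{solv_cyclic_decomp}'s underlying bookkeeping. The base group $N:=G^n$ is a nilpotent normal subgroup of $\Gamma$ precisely when $G$ is nilpotent; but in general $G$ need only be solvable, so I cannot take $N=G^n$ directly. Instead I would peel $\Gamma$ apart using a series for $G$ itself: if $((N_i,G_i)\mid i)$ is a series as in Definition \ref{def:scholzexp} for $G$ reaching $r(G)$, then $N_1^n$ is a \emph{nilpotent} (it is a power of the nilpotent group $N_1$) normal subgroup of $\Gamma$, and $G_1\wr H = G_1^n\rtimes H$ is a partial complement for it. Iterating, $\Gamma$ admits a series whose nilpotent pieces are $N_i^n$ (of exponent $e_i=\exp(N_i)$) for $i=1,\dots,s$, after which one is left with $\{1\}\wr H = H$, to which one appends a series for $H$ reaching $r(H)$. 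By the telescoping computation in the proof of Lemma \ref{lem:exp_char} (applied repeatedly), this yields $r(\Gamma)\le \textrm{lcm}(e_1,\dots,e_s,r(H)) = \textrm{lcm}(r(G),r(H))$. Combining with the previous paragraph gives $\textrm{ge}(\Gamma)\le r(\Gamma)\le \textrm{lcm}(r(G),r(H)) = \textrm{lcm}(\textrm{ge}(G),\textrm{ge}(H)) = \textrm{ge}(\Gamma)$, so equality holds throughout.

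The main obstacle — and this is presumably where hypotheses (a) and (b) enter — is the step $N_1^n$ \emph{nilpotent}: while $N_1$ is nilpotent, the exponent of $N_1^n$ equals $\exp(N_1)=e_1$ regardless of $n$, so that part is fine; the subtlety is rather that in the decomposition $\Gamma = N_1^n \rtimes (G_1\wr H)$ I need $G_1\wr H$ to be a genuine \emph{proper} partial complement, i.e.\ $N_1^n\cdot(G_1^n\rtimes H)=\Gamma$ with $G_1^n\rtimes H\ne\Gamma$, which holds since $N_1G_1=G$ and $G_1\ne G$. The hypotheses (a)/(b) are needed to control the very first pass when instead one wants to put the \emph{whole} base group as one nilpotent block and not descend through a $G$‑series at all — that is, to realize $r(\Gamma)$ economically one might prefer the series starting with $N:=G^n$ when $G$ is nilpotent, but when $G$ is only solvable, descending through a $G$-series costs nothing extra in the $\textrm{lcm}$ \emph{except} that the leftover from the base group has to be reconciled with the $H$-part; if $\textrm{ge}(G)\mid\textrm{ge}(H)$ (case (a)) the base contributions are absorbed for free, while in case (b) the cyclic quotient $C$ of order $\textrm{ge}(G)$ lets one realize the base-group exponent with a single cyclic piece $C^n$ of exponent exactly $\textrm{ge}(G)$ and handle the rest (a group of exponent dividing $\textrm{ge}(G)$ already, by regularity/class considerations) without inflating the $\textrm{lcm}$. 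I would isolate exactly which of these two mechanisms is invoked and check that no other case is needed; this is the one place the argument is not purely formal.
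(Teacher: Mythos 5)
Your upper bound for $r(\Gamma)$ is essentially the paper's own argument: you take a series $((N_i,G_i))$ realizing $r(G)$, pass to $N_i^n$ with partial complements $G_i\wr H$, land in $H$, and append a series realizing $r(H)$, obtaining $r(\Gamma)\le \textrm{lcm}(r(G),r(H))$. That part is correct, and --- importantly --- it is correct \emph{unconditionally}: hypotheses a) and b) play no role there. Your closing paragraph, which locates the need for a)/b) in the nilpotency/bookkeeping of the base block of this upper-bound computation, therefore misidentifies where the difficulty lies.

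The genuine gap is in your lower bound. You assert that ``restricting to the base group $G^n$, any generating set must, together with elements of $H$, generate the first copy of $G$, which forces $\textrm{ge}(G)\mid\textrm{ge}(\Gamma)$.'' This does not follow: $G$ is a quotient of the subgroup $G^n$ but not a quotient of $\Gamma$, and $\textrm{ge}$ is only monotone under quotients. Concretely, a generator $(g_1,\dots,g_n;h)$ of $\Gamma$ with $h\ne 1$ can have order far smaller than the orders of its coordinates --- e.g.\ $(g,g^{-1};\sigma)\in G\wr C_2$ has order $2$ whatever the order of $g$ --- so the orders of elements of a generating set of $\Gamma$ do not control the orders of any induced generators of the first copy of $G$. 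The only lower bound one gets for free is via the homomorphism $\Gamma\to G^{\mathrm{ab}}\times H$ (multiply the coordinates), which yields $\textrm{lcm}(\exp(G^{\mathrm{ab}}),\textrm{ge}(H))$, and $\exp(G^{\mathrm{ab}})$ may be strictly smaller than $\textrm{ge}(G)$. This is exactly the step the hypotheses are designed to rescue: in case a) one only needs $\textrm{ge}(\Gamma)\ge\textrm{ge}(H)=\textrm{lcm}(\textrm{ge}(G),\textrm{ge}(H))$, which follows from the projection $\Gamma\twoheadrightarrow H$; in case b) one uses $\Gamma\twoheadrightarrow C\wr H\twoheadrightarrow C\times H$ (the last surjection because $C$ is abelian) to get $\textrm{ge}(\Gamma)\ge\textrm{lcm}(\textrm{ge}(C),\textrm{ge}(H))=\textrm{lcm}(\textrm{ge}(G),\textrm{ge}(H))$. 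Without a) or b) your claimed divisibility $\textrm{ge}(G)\mid\textrm{ge}(\Gamma)$ is unproved, so the chain of inequalities does not close.
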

\begin{proof}
Let $((N_i,G_i)\mid i\in \{1,...,s\})$ as in Def.\ \ref{def:scholzexp}, achieving the minimal value $\textrm{lcm}(e_1,...,e_{s})=r(G)$. Set $\tilde{N_i} = N_i^n \le G^n$ and $\tilde{G_i} = G_i\wr H$. Then $\tilde{G_i}$ is a partial complement for the normal subgroup $\tilde{N_i}$ of $\tilde{G_{i-1}}$, and $\tilde{G_s} = H$. Continue this sequence by choosing a sequence of normal subgroups and partial complements inside $H$ achieving the minimal value $r(H)$. One obtains $r(\Gamma) \le \textrm{lcm}(\exp(\tilde{N_1}),\dots, \exp(\tilde{N_s}), r(H))$. Noting that $\exp(\tilde{N_i})=\exp(N_i)$ for all $i$, one obtains $r(\Gamma) \le \textrm{lcm}(r(G), r(H)) = \textrm{lcm}(\textrm{ge}(G), \textrm{ge}(H))$.

Now in case a), the latter expression simply equals $\textrm{ge}(H)$, which is a trivial lower bound for $\textrm{ge}(\Gamma)$, via projecting a generating set onto one of $H$. In total $r(\Gamma) \le \textrm{ge}(\Gamma)$, and thus equality, as claimed. In case b), $\Gamma$ projects onto $C\wr H$, which (due to $C$ being abelian) projects onto $C\times H$. Thus $\textrm{ge}(\Gamma) \ge \textrm{ge}(C\times H) = \textrm{lcm}(\textrm{ge}(C), \textrm{ge}(H)) = \textrm{lcm}(\textrm{ge}(G), \textrm{ge}(H))$, with equality $r(\Gamma) = \textrm{ge}(\Gamma)$ in total.
\end{proof}

\begin{remark}
Lemmas \ref{lem:dirprod} and \ref{lem:wreath} yield a mechanism to construct large classes of groups with a positive answer to Questions \ref{conj:e3} and \ref{conj:d}, by beginning, e.g., with groups as in Theorem \ref{thm:main} and taking iterated direct and wreath products.
For example, taking iterated wreath products of a $p$-group $G$ of nilpotency class $\le p$ yields examples $\Gamma$ of arbitrarily high nilpotency class, whereas starting with, e.g., a nilpotent group $G$ of non prime-power order necessarily yields non-nilpotent examples $\Gamma$ (see, e.g., \cite{Baum}). 
 It should be remarked that the stronger condition $\exp(G) = \textrm{ge}(G)$, while also preserved under taking direct products, is not at all preserved under taking wreath products. In fact, when taking iterated wreath products of a group $G$ with itself, the generator exponent is preserved, whereas the exponent grows in every iteration. This serves as additional motivation for investigation of the constant $r(G)$, since it allows automatic construction of classes of examples which would be missed by naive considerations investigating only $\exp(G)$.
\end{remark}

\subsection{Proof of Theorem \ref{thm:main}}
\label{sec:proof}
We now turn to the proof of Theorem \ref{thm:main}. This involves a close inspection of commutators in nilpotent groups.

\begin{lemma}
\label{lem:main}
Let $G$ be a $p$-group of generator exponent $e:=\textrm{ge}(G)$ and nilpotency class $c$. If 
$p\ge c$, then $G'=[G,G]$ is of exponent at most $e$.
\end{lemma}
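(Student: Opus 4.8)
The plan is to reduce the statement to the regularity of the commutator subgroup and then to control the orders of individual commutators by means of Hall's collection formula. First I would dispose of the case $c<p$: here $G$ itself is regular, so by Proposition~\ref{prop:expge} one has $\exp(G)=\textrm{ge}(G)=e$, whence $\exp(G')\le\exp(G)=e$. Thus I may assume $c=p$ from now on.

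The next step is the observation that $G'=G_2$ is itself regular. Indeed, by Lemma~\ref{lem:highcomm} the lower central series of $G_2$ satisfies $(G_2)_j\le G_{2j}$, so $G_2$ is nilpotent of class at most $\lfloor c/2\rfloor\le\lfloor p/2\rfloor<p$ and hence regular. Consequently, by Proposition~\ref{prop:expge}, the order of any product of elements of $G'$ is bounded by the maximum of the orders of the factors. Since $G'$ is generated by the commutators $[x,y]$ with $x,y\in G$, it therefore suffices to prove that $\textrm{ord}([x,y])$ divides $e$ for all $x,y\in G$; the exponent bound for $G'$ then follows at once.

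For the order of a single commutator I would apply Hall's Theorem~\ref{thm:hall} to the product $[x,y]=x^{-1}\cdot x^y$ (where $x^y=y^{-1}xy$), obtaining
\[
[x,y]^n=[x^n,y]\prod_{i\ge 2}\prod_j R_{i,j}^{f_{i,j}(n)},
\]
where the $R_{i,j}$ are the weight-$i$ commutators in $x^{-1}$ and $x^y$. A short computation in $G/G_{i+1}$, using $x^y=x[x,y]$ with $[x,y]\in G_2$ together with Lemma~\ref{lem:highcomm}, shows $R_{i,j}\in G_{i+1}$; in particular the product runs only over $2\le i\le c-1$. I would also note that $x^e\in G_2$ for every $x\in G$: the abelianisation $G/G_2$ is regular, and its generator exponent (hence, by Proposition~\ref{prop:expge}, its exponent) divides $\textrm{ge}(G)=e$, since a generating set realising $e$ projects onto a generating set of $G/G_2$.

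The crux, which I expect to be the main obstacle, is to evaluate the correction product at $n=e$ and to show that it can be eliminated. Here the hypothesis $p\ge c$ enters decisively: each $f_{i,j}$ is a $\zz$-linear combination of $\binom{X}{1},\dots,\binom{X}{i}$ with $i\le c-1\le p-1$, so that $f_{i,j}(p)\equiv 0\pmod p$ for every surviving index, exactly as in the proof that class $<p$ forces regularity. Thus modulo the next term of the lower central series the $p$-power map behaves additively, and $[x,y]^p$ lies in $G_3$ up to $p$-th powers of elements of $G_3$. I would then climb the lower central series by induction on the class --- passing to the central quotient $G/G_c$ (which has class $c-1<p$, so that $w^e\in G_c$ already holds for all $w\in G'$) and using the regularity of $G'$ to manage the $p$-th powers of the deeper correction terms --- until the corrections are pushed into $G_{c+1}=\{1\}$, yielding $[x,y]^e=1$. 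Making the bookkeeping of these correction terms precise, so that the vanishing of the binomial coefficients compounds correctly along the whole series rather than merely improving the depth by a bounded amount, is the delicate point; it is essentially P.\ Hall's regularity computation carried out one nilpotency class higher and localised to the derived subgroup.
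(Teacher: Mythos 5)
Your proposal assembles the right ingredients (Hall's collection formula, regularity of the relevant subgroups, and the divisibility of $f_{i,j}(e)$ by $e$ when $i<p$), but it stalls exactly where you say it does, and the paper's proof closes that gap with two devices you are missing. First, you try to bound the order of $[x,y]$ for \emph{arbitrary} $x,y\in G$, so in your expansion of $[x,y]^e=(x^{-1}\cdot x^y)^e$ the leading term is $[x^e,y]$, which you can only place in $G_3$ (since $x^e\in G_2$); you then have to chase this nontrivial residue down the lower central series, which is the ``delicate bookkeeping'' you leave open. The paper avoids this entirely: using the identity $[xz,y]=[z,[y,x]][x,y][z,y]$ together with regularity of $G_d$, it reduces to commutators $[x_k,y]$ where $x_k$ is a \emph{generator of order dividing $e$}, so that $[x_k^e,y]=1$ exactly and the leading term vanishes on the nose. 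Second, to kill the correction terms $R_{i,j}^{f_{i,j}(e)}$ you need to know not only that $e\mid f_{i,j}(e)$ (which your $i\le c-1<p$ argument gives) but also that the $R_{i,j}$ themselves have order dividing $e$; since these lie in $G_{d+1}$ (resp.\ $G_3$ and deeper in your setup), this requires an a priori exponent bound on the deeper terms of the lower central series. The paper supplies this by running a \emph{downward} induction, proving $\exp(G_d)\le e$ for $d=c+1,c,\dots,2$, so that at each stage the correction terms land in a subgroup already known to have exponent dividing $e$. Your proposed alternative induction (passing to $G/G_c$ and invoking regularity of $G'$) only controls elements modulo $G_c$ and does not by itself bound the orders of the residual central elements, so as written it does not close the argument.

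In short: the reduction to generator-commutators and the reverse induction along the lower central series are not optional conveniences but the load-bearing steps, and without them the crux of your argument remains a genuine gap rather than a routine verification.
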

\begin{proof}
We show iteratively that $G_d$ is of exponent at most $e$ for $d=c+1,...,2$ in inverse order. 
The statement is trivial for $G_{c+1}=\{1\}$.\\
So assume the statement has been shown for $G_{d+1}$ (for some $d\ge 2$).
Since $G_d$ is of nilpotency class $\le c-1 < p$, it is regular. Thus by Proposition \ref{prop:expge}, it suffices to show $\textrm{ge}(G_d) \le e$. I.e., it suffices to show that every {\it commutator} in $G_d = [G,G_{d-1}]$ has order dividing $e$.

Let $\{x_1,\dots, x_n\}$ be a generating set of $G$ with all $x_i$ of order dividing $e$ (which exists by assumption).
Using the well-known commutator identity \begin{equation}
\label{eq:1}
[xz,y]=[z,[y,x]][x,y][z,y]\end{equation} iteratively, every commutator $[g,h]$ (with $g\in G$, $h\in G_{d-1}$) can be written as a product of commutators of the form $[x_{n_i},h_i]$ with $n_i\in \{1,...,n\}$ and $h_i\in G_{d-1}$. In particular, $[g,h]^e = (\prod_i [x_{n_i},h_i])^e$.
Again since $G_d$ is regular, the order of $[g,h]$ cannot exceed all the orders of $[x_{n_i},h_i]$. It thus suffices to show $[x_k,y]^e=1$ for $k\in \{1,...,n\}$ and $y\in G_{d-1}$. 
We have
$$1=[1,y]=[x_k^e,y]=[x_k,y]^{x_k^{e-1}} \cdot [x_k^{e-1},y] = [x_k,y]^{x_k^{e-1}} \cdots [x_k,y]^{x_k}\cdot[x_k,y].$$
Using the fact that $x_k^{e-1}=x_k^{-1}$, the above equation simplifies to
$$1=[x_k^e,y]=(x_k\cdot [x_k,y])^e.$$
Write the last power $(x_k\cdot [x_k,y])^e$ out using Theorem \ref{thm:hall}. We obtain $1=\underbrace{x_k^e}_{=1}[x_k,y]^e$ times terms of the form $R_{i,j}^{f_{i,j}(e)}$, with weight $i\ge 2$ commutators $R_{i,j}$ of $x_k$ and $[x_k,y]$ and polynomials $f_{i,j}$ which are integer linear combinations of $\binom{X}{1}$,..., $\binom{X}{i}$. In particular, all these higher commutators lie in $[G,[G,G_{d-1}]] = G_{d+1}$. Therefore they all have order dividing $e$, by induction. 
Furthermore, using the fact that $[x_k,y]\in G_d$ and the fact that all higher commutators $R_{i,j}$ as above may be assumed to contain at least one entry $[x_k,y]$, Lemma \ref{lem:highcomm} yields $R_{i,j}\in [G_d, G_{i-1}] \le G_{d+i-1}$. In particular, $R_{i,j}$ vanishes for all $i\ge c-d+2$. So we may assume $i\le c-d+1 \le c-1 < p$. But then $i!$ is coprime to $p$ and hence $f_{i,j}(e)$ is divisible by $e$, implying $R_{i,j}^{f_{i,j}(e)}=1$.

Therefore finally $[x_k,y]^e=1$. This shows the assertion.
\end{proof}

\begin{thm}
\label{thm:pgroups}
For any $p$-group $G$ of nilpotency class $c\le p$, it holds that $r(G) = \textrm{ge}(G)$.
\end{thm}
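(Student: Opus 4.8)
The plan is to argue by induction on $|G|$, using that $r(G)\ge \textrm{ge}(G)$ holds for every solvable group (as recorded above), so that it remains only to prove $r(G)\le \textrm{ge}(G)$; for $|G|=1$ this is trivial. Write $e:=\textrm{ge}(G)$ and fix a generating set of $G$ all of whose element orders divide $e$. By the Burnside basis theorem I may replace it by a subset $\{x_1,\dots,x_d\}$ whose images form an $\ff_p$-basis of $G/\Phi(G)$ — still with $\textrm{ord}(x_i)\mid e$ for all $i$ — where $d=d(G)\ge 1$ (if $d=1$ then $G$ is cyclic and the claim is immediate, so assume $d\ge 2$).

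The key is to feed one carefully chosen step into Lemma \ref{lem:exp_char}. Set
\[ N:=\langle x_1\rangle\, G' \qquad\text{and}\qquad U:=\langle x_2,\dots,x_d\rangle\, G'. \]
Both contain $G'$ and hence are normal in $G$; moreover $NU$ is a subgroup of $G$ (a product of normal subgroups) containing $x_1,\dots,x_d$, so $NU=G$, while $U\ne G$ since $\overline{x_2},\dots,\overline{x_d}$ span only a proper subspace of $G/\Phi(G)$. Thus $U$ is a partial complement of $N$. Now $U$ is a $p$-group of nilpotency class $\le p$ with $|U|<|G|$, so induction gives $r(U)=\textrm{ge}(U)$; and $\textrm{ge}(U)$ divides $e$, because $U$ is generated by $x_2,\dots,x_d$ together with a generating set of $G'$ whose elements have order dividing $\exp(G')$, which is at most $e$ by Lemma \ref{lem:main} (applicable precisely because $c\le p$). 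Granting in addition that $\exp(N)$ divides $e$, Lemma \ref{lem:exp_char} yields $r(G)\le \textrm{lcm}\{\exp(N),r(U)\}\le e$, which together with $r(G)\ge \textrm{ge}(G)=e$ finishes the proof.

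It thus remains to show $\exp(N)\mid e$. An element of $N=\langle x_1\rangle G'$ has the form $y=x_1^k g$ with $g\in G'$, and I would expand $y^e$ using P.\ Hall's formula (Theorem \ref{thm:hall}) with $a=x_1^k$, $b=g$. The leading term $a^e b^e$ is trivial, since $\textrm{ord}(x_1)\mid e$ and $\exp(G')\mid e$. In each higher term $R_{i,j}^{f_{i,j}(e)}$ (with $i\ge 2$) the only nontrivial commutators $R_{i,j}$ are those involving the entry $b\in G_2$ at least once (a weight $\ge 2$ commutator in $a$ alone is trivial), and such an $R_{i,j}$ lies in $G_{i+1}$ by iterating Lemma \ref{lem:highcomm}; in particular $R_{i,j}\in G'$, so its order divides $e$, and moreover $R_{i,j}=1$ whenever $i\ge c$, leaving only weights $2\le i\le c-1\le p-1$. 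For those, $f_{i,j}(e)$ is an integer combination of $\binom{e}{1},\dots,\binom{e}{i}$ with $i<p$, and since $e$ is a power of $p$ each $\binom{e}{d}$ with $0<d<p$ has $p$-adic valuation equal to that of $e$, hence is divisible by $e$; so every $R_{i,j}^{f_{i,j}(e)}=1$, giving $y^e=1$ and $\exp(N)\mid e$.

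I expect this last paragraph to be the crux: everything before it (the choice of $N$ and $U$, the reduction to a basis via Burnside, and the bookkeeping through Lemmas \ref{lem:exp_char} and \ref{lem:main}) is routine, whereas pinning down via Hall's formula exactly which iterated-commutator correction terms survive in $y^e$, and checking that the $p$-adic valuations of the binomial coefficients $\binom{e}{d}$ for $d<p$ suffice to annihilate them, is where the hypothesis $c\le p$ is really used and where care is needed.
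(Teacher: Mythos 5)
Your proof is correct, and its skeleton is the same as the paper's: both arguments single out one generator $x$ from a generating set whose element orders divide $e=\textrm{ge}(G)$, take $N=\langle x\rangle G'$ as the nilpotent normal subgroup and the subgroup generated by the remaining generators as a partial complement $U$, bound $\textrm{ge}(U)$ and $\exp(N)$ by $e$ using Lemma \ref{lem:main}, and close the induction with Lemma \ref{lem:exp_char}. The one point where you genuinely diverge is the verification that $\exp(N)$ divides $e$. The paper notes that $N'\le [G,G']=G_3$, so $N$ has nilpotency class at most $c-1<p$ and is therefore regular, whence Proposition \ref{prop:expge} bounds the order of $x^k g$ by $\max(\mathrm{ord}(x^k),\mathrm{ord}(g))$, both dividing $e$. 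You instead expand $(x^kg)^e$ directly via Hall's formula (Theorem \ref{thm:hall}), observe that the only nontrivial correction terms lie in $G_{i+1}$ and hence survive only for $2\le i\le c-1\le p-1$, and kill them using the $p$-adic valuation of $\binom{e}{d}$ for $d<p$ together with $\exp(G')\mid e$. This is a valid, self-contained alternative; it is in effect the same computation the paper performs inside the proof of Lemma \ref{lem:main}, transplanted to $N$, at the cost of redoing by hand what regularity gives for free. The remaining differences are immaterial: your $U$ additionally contains $G'$ (so you need Lemma \ref{lem:main} to bound $\textrm{ge}(U)$, where the paper reads it off the generators directly), and you make the properness of $U$ explicit via the Burnside basis theorem where the paper appeals to minimality of the generating set.
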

\begin{proof}
Let $\{x_1,...,x_k\}$ be a minimal set of generators such that all $x_i$ have order dividing $e:=\textrm{ge}(G)$. Assume $k\ge 2$ without loss.
Set $H:=\langle G', x_k\rangle$, and consider the commutator subgroup $H'$. Using the commutator identitiy \eqref{eq:1} as in the previous proof, one easily verifies that every commutator in $[H,H]$ is a product of commutators of the form $[x_k, z]$ or $[y,z]$ with $y,z \in G'$. In particular, we have $H'\le [G,G'] = G_3$. 

Therefore $H$ has nilpotency class at most $c-1 \le p-1$, and is therefore regular.
Thus Proposition \ref{prop:expge} yields that for any $a\in \langle x_k\rangle$ and $b\in G'$, the order of $ab$ does not exceed the maximum of the orders of $a$ and $b$. However,  $ord(a)$ divides $e$ by definition, and $ord(b)$ divides $e$ by Lemma \ref{lem:main}. In total $(ab)^e=1$, and so $\exp(H)$ divides $e$. Furthermore $H$ is a normal subgroup of $G$ (as $G/G'$ is abelian). Finally, $H$ has a partial complement in $G$, namely $U:=\langle x_1,...,x_{k-1}\rangle$, which is strictly smaller then $G$ by definition of $\{x_1,...,x_k\}$.
Of course $U$ is then of generator exponent dividing $e$, again by definition, and of nilpotency class $\le c$. Inductively, $r(U)$ divides $e$, and since $\exp(H)$ divides $e$ by the above, it follows from Lemma \ref{lem:exp_char} that $r(G)\le e$. Since always $r(G)\ge e$, the assertion follows.
\end{proof}
 \noindent
In particular, we get the following, which due to Proposition \ref{solv_cyclic_decomp}  readily yields Theorem \ref{thm:main}.
\begin{cor}
Let $G$ be a finite nilpotent group of class $c$, and assume that $p\ge c$ where $p$ is the smallest prime divisor of $|G|$. 
Then $r(G) = \textrm{ge}(G)$.
\end{cor}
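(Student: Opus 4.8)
The plan is to reduce the Corollary to Theorem \ref{thm:pgroups} via the decomposition of a nilpotent group into its Sylow subgroups, combined with the behavior of both $r$ and $\textrm{ge}$ under direct products established in Lemma \ref{lem:dirprod}. First I would write $G = P_1 \times \cdots \times P_t$, where the $P_i$ are the Sylow subgroups of $G$ for the distinct primes $q_1 < q_2 < \cdots < q_t$ dividing $|G|$; this decomposition is available precisely because $G$ is nilpotent. The smallest prime is $p = q_1$, so the hypothesis reads $q_1 \ge c$, where $c$ is the nilpotency class of $G$.

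Next I would bound the nilpotency class of each factor. Since each $P_i$ is a direct factor (equivalently, a quotient) of $G$, its nilpotency class $c_i$ satisfies $c_i \le c \le q_1 \le q_i$; thus each $P_i$ is a $q_i$-group of nilpotency class at most $q_i$, so Theorem \ref{thm:pgroups} applies to give $r(P_i) = \textrm{ge}(P_i)$ for every $i$. This is the crucial input — it is exactly the case covered by the previous theorem, and the only place the prime-versus-class hypothesis is used.

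Finally, since $G$ is itself solvable (being nilpotent) and each direct factor $P_i$ satisfies $\textrm{ge}(P_i) = r(P_i)$, Lemma \ref{lem:dirprod} immediately yields $\textrm{ge}(G) = r(G)$, which is the desired conclusion. I do not expect any genuine obstacle here: the whole argument is a packaging step, with Theorem \ref{thm:pgroups} doing the real work on each Sylow factor and Lemma \ref{lem:dirprod} assembling the pieces. The only point requiring a moment's care is observing that nilpotency class does not increase when passing to a direct factor (a quotient), which is standard. Alternatively, one could bypass Lemma \ref{lem:dirprod} and argue directly: run the construction of Definition \ref{def:scholzexp} on $G$ by choosing, at each stage, nilpotent normal subgroups and partial complements that respect the Sylow decomposition, reducing $\textrm{lcm}$ of the resulting exponents to $\textrm{lcm}_i r(P_i)$; but invoking Lemma \ref{lem:dirprod} is cleaner.
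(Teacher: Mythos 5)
Your proposal is correct and follows exactly the paper's own argument: decompose the nilpotent group into its Sylow subgroups, apply Theorem \ref{thm:pgroups} to each factor (noting the class of each Sylow factor is at most $c\le p\le q_i$), and assemble via Lemma \ref{lem:dirprod}. The paper states this in one line; your write-up merely makes the bookkeeping explicit.
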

\begin{proof}
Since a nilpotent group is the direct product of its Sylow subgroups, this follows directly from Theorem \ref{thm:pgroups} together with Lemma \ref{lem:dirprod}.
\end{proof}

\section{Combination with other methods}
\label{sec:last}
The bound $c\le p$ in Theorem \ref{thm:pgroups} is best possible in the sense that there exist $p$-groups of nilpotency class $p+1$ for which $r(G)>\textrm{ge}(G)$, the easiest and smallest example being the dihedral group $D_8$ of order $16$. For other small primes $p$, computer search, e.g.\ with Magma (\cite{Magma}), provides examples of order $|G| = p^{p+2}$, and it should be possible to give explicit examples for all $p$. E.g., for $p=3$,  
six out of 67 
 groups of order $p^{p+2}$ have nilpotency class $p+1$, and out of those, two fail to satisfy $r(G) = \textrm{ge}(G)$. For such groups, additional ideas are required to answer Questions \ref{conj:e3} and \ref{conj:d}. One thing to note is that, due to the nature of Shafarevich's method, one may improve on the bound $r(G)$ by replacing any value $r(G_i)$ in the iteration process of Definition \ref{def:scholzexp} by any known upper bound for $e(\mathbb{Q}, G_i)$, in case such a bound better than $r(G_i)$ is known. E.g., $e(D_n, \mathbb{Q}) =2$ is known from class field theory (see, e.g., \cite{Yamamoto}). Substituting this value in the definition of $r(G)$ whenever a dihedral $G_i$ occurs (and calling the thus altered constant $r'(G)$ for the moment) yields $e(\mathbb{Q},G) = r'(G) = \textrm{ge}(G)$ for six of the eight nilpotent groups of order $<64$ which fulfill $r(G) > \textrm{ge}(G)$. The two remaining cases ($U_1=$SmallGroup(32,19) and $U_2=$SmallGroup(32,20) in Magma's database) both have generator exponent $4$, and $r'(U_i)=8$. However, they both embed as index-$2$ normal subgroups into $G=$SmallGroup(64,189), which has $r'(G)=2$. So there exist tame $G$-extensions $L/\mathbb{Q}$ with all inertia groups of order $2$. Choose a quadratic extension $K/\mathbb{Q}$, without loss of generality linearly disjoint to the fixed fields $F_i$ of $U_i$ in $L$ ($i=1,2$), such that $LK/K$ is an unramified $G$-extension. Then $LK/F_iK$ is an unramified $U_i$-extension and $F_iK/\mathbb{Q}$ is Galois of group $C_2\times C_2$, whose order equals $\textrm{ge}(U_i)$. We have therefore at least answered Question \ref{conj:d} for $U_i$, and in total have obtained (aided by computer calculation):
 
 \begin{thm}
 Question \ref{conj:d} has a positive answer for all nilpotent groups of order $<64$.
 \end{thm}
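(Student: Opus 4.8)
The plan is to reduce the statement to a finite check plus the two structural arguments already laid out in the text. First I would recall that for a nilpotent group $G$ the direct-product compatibility of Lemma~\ref{lem:dirprod} (together with Lemma~\ref{lem:wreath} if needed) lets us restrict to $p$-groups: if $G=P_1\times\cdots\times P_m$ is the Sylow decomposition and each $P_j$ satisfies Question~\ref{conj:d}, then so does $G$, since $\textrm{ge}(G)=\prod_j\textrm{ge}(P_j)$ and one can build the relevant number field as a compositum of cyclic fields of coprime degree. So it suffices to treat $p$-groups of order $<64$, i.e.\ $2$-groups of order $\le 32$ and the $3$-group of order $27$ (which has class $\le 2\le 3$, hence is covered by Theorem~\ref{thm:pgroups}).

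Next I would invoke Theorem~\ref{thm:pgroups}: every $p$-group of nilpotency class $c\le p$ already satisfies $r(G)=\textrm{ge}(G)$, and then Proposition~\ref{solv_cyclic_decomp} gives $e(\mathbb{Q},G)=d'(\mathbb{Q},G)\le\textrm{ge}(G)$, in particular a positive answer to Question~\ref{conj:d}. This disposes of all $2$-groups of class $\le 2$ and all $3$-groups of class $\le 3$. By a computer enumeration (Magma) of the groups of order $\le 32$, one isolates exactly the finitely many nilpotent groups $G$ with $r(G)>\textrm{ge}(G)$; as stated in the text there are eight such groups of order $<64$. For each of these I would run the refined iteration: in Definition~\ref{def:scholzexp}, whenever a subgroup $G_i$ in the chain is dihedral one may substitute the known value $e(\mathbb{Q},D_n)=2$ (from class field theory, \cite{Yamamoto}) for $r(G_i)$, obtaining the modified constant $r'(G)$. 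A direct computation shows $r'(G)=\textrm{ge}(G)$ for six of the eight groups, which by the Shafarevich machinery (the inductive argument in the proof of Proposition~\ref{solv_cyclic_decomp}, now feeding in an honest dihedral tame extension with inertia of order $2$ at the relevant step instead of a generic one) yields tame $G$-extensions of $\mathbb{Q}$ with all ramification indices dividing $\textrm{ge}(G)$ and abelian decomposition groups, hence via Lemma~\ref{lem:unram} a cyclic field $K$ with $[K:\mathbb{Q}]\le\textrm{ge}(G)$ admitting an unramified $G$-extension.

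The two remaining groups are $U_1=\mathrm{SmallGroup}(32,19)$ and $U_2=\mathrm{SmallGroup}(32,20)$, both of generator exponent $4$ but with $r'(U_i)=8$. Here I would use an embedding trick: each $U_i$ sits as an index-$2$ normal subgroup in $G=\mathrm{SmallGroup}(64,189)$, for which $r'(G)=2$, so by the previous paragraph there is a tame $G$-extension $L/\mathbb{Q}$ with all inertia groups of order $2$ and abelian decomposition groups. Pick a quadratic field $K/\mathbb{Q}$ linearly disjoint from the fixed field $F_i$ of $U_i$ inside $L$, chosen so that $LK/K$ is unramified (possible because the ramified primes of $L/\mathbb{Q}$ have inertia of order $2$, absorbed by a suitable quadratic twist — concretely, take $K$ ramified exactly where $L/\mathbb{Q}$ is and with matching inertia, invoking Abhyankar's lemma). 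Then $LK/F_iK$ is an unramified $U_i$-extension and $F_iK/\mathbb{Q}$ is Galois with group $C_2\times C_2$ of order $4=\textrm{ge}(U_i)$, which answers Question~\ref{conj:d} for $U_i$.

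The main obstacle I expect is not conceptual but bookkeeping: one must actually carry out the Magma enumeration of all nilpotent groups of order $<64$, compute $\textrm{ge}$, $r$, and $r'$ for each, verify the chain of dihedral substitutions gives $r'(G)=\textrm{ge}(G)$ in the six cases, and — the one genuinely delicate point — confirm that the linear-disjointness and local conditions in the $U_1,U_2$ argument can be met simultaneously (that the quadratic field $K$ can be chosen to make $LK/K$ unramified while staying disjoint from both $F_1$ and $F_2$, which follows from the abundance of quadratic fields with prescribed ramification but should be stated carefully). Everything else is a direct application of Theorem~\ref{thm:pgroups}, Proposition~\ref{solv_cyclic_decomp}, Lemma~\ref{lem:unram}, and the structural lemmas of Section~\ref{sec:compat}.
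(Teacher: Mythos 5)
Your proposal is correct and follows essentially the same route as the paper: reduction to the eight exceptional nilpotent groups of order $<64$ with $r(G)>\textrm{ge}(G)$, the dihedral substitution giving $r'(G)=\textrm{ge}(G)$ for six of them, and the embedding of $U_1,U_2$ as index-$2$ normal subgroups of SmallGroup$(64,189)$ combined with a suitably chosen quadratic twist for the remaining two. The extra care you take in justifying the choice of the quadratic field $K$ (matching inertia plus linear disjointness) is a welcome elaboration of a point the paper leaves implicit, but it is not a different argument.
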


\end{document}